\documentclass[a4paper, 11 pt, twoside]{amsart}
\usepackage{srollens-en}[2011/03/03]
\usepackage[left = 3.5cm, right = 3.5cm, headsep = 6mm,
footskip = 10mm, top = 35mm, bottom = 35mm, footnotesep=5mm, headheight =
2cm]{geometry}

\usepackage{tikz, tikz-cd}

\usepackage[unicode,bookmarks, pdftex, backref = page]{hyperref}

\DeclareMathOperator{\Gl}{Gl}

\newcommand{\Wedge}{\textstyle\bigwedge}

\renewcommand{\lg}{\ensuremath{\mathfrak g}}
\newcommand{\lh}{\mathfrak h}
\newcommand{\einsnull}[1]{{{#1}^{1,0}}}
\newcommand{\nulleins}[1]{{{#1}^{0,1}}}
\newcommand{\dq}{\delbar}

\newcommand{\I}{\mathrm{i}}

\title[Dolbeault cohomology of foliated nilmanifolds]{Dolbeault cohomology of complex  nilmanifolds foliated in toroidal groups}

\author{Anna Fino}
\address{Anna Fino\\Dipartimento di Matematica \lq \lq G. Peano\rq \rq  \\
Universit\'a di torino\\
via Carlo Alberto 10\\
10123 Torino\\
Italy}

\email{annamaria.fino@unito.it}

\author{S\"onke Rollenske}
\address{S\"onke Rollenske\\FB 12/Mathematik und Informatik\\
Philipps-Universit\"at Marburg\\
Hans-Meerwein-Str. 6\\
35032 Marburg\\
Germany}
\email{rollenske@mathematik.uni-marburg.de}

\author{Jean Ruppenthal}
\address{Jean Ruppenthal, Department of Mathematics, University of Wuppertal, Gau{\ss}str. 20, 42119 Wuppertal, Germany.}
\email{ruppenthal@uni-wuppertal.de}

\subjclass[2010]{Primary 22E25; Secondary  37F75, 53C30, 53C15}

\begin{document}

\begin{abstract}
It is conjectured that the Dolbeault cohomology of a complex nilmanifold $X$  is computed by left-invariant forms.  
We prove this under the  assumption that $X$ is suitably foliated in toroidal groups and deduce that the conjecture holds in real dimension up to six. 

Our approach generalises previous methods, where the existence of a holomorphic fibration was a crucial ingredient.

\end{abstract}

\maketitle
\setcounter{tocdepth}{1}
\tableofcontents

\section{Introduction}
 A nilmanifold is a compact quotient  $M = \Gamma \backslash G$, where $G$  is a real simply-connected nilpotent Lie group and $\Gamma$  is a discrete cocompact subgroup of G. 
 By  \cite{nomizu54} the inclusion of left-invariant differential forms in the de Rham complex $\Wedge^* \lg^* \rightarrow \ka^*( M)$ induces an isomorphism between the Lie algebra cohomology of $\lg$ and the de Rham
cohomology of $M$, where $\lg$ is the Lie algebra of $G$.
  
 By a complex nilmanifold  $(M, J)$ we will mean a nilmanifold $M = \Gamma \backslash G$ endowed with a complex structure  $J$ 
 induced by an integrable left-invariant complex structure on $G$  and
 thus uniquely determined by an (integrable) complex structure  $J$ on the Lie algebra $\lg$. 
The study of the Dolbeault cohomology of  complex nilmanifolds  is motivated by the fact   that nilmanifolds  provide examples of symplectic manifolds with no K\"ahler structure. Indeed, Benson and Gordon \cite{ben-gor88} proved that a complex nilmanifold is K\"ahler if and only if it is a torus (see also \cite{hasegawa89}).

By \cite{con-fin01, rollenske09a}  there exists a natural map  
\[\iota:   H^{*,*}   (\lg, J)   \rightarrow  H^{*,*} (M, J)\]   
from the Dolbeault cohomology as computed by left-invariant forms to the usual Dolbeault cohomology, that is always injective. The map $\iota$ is known to be an isomorphism in many cases:
if  $(M, J)$  is a complex  nilmanifold  such that   $\lg$  admits  either  a (principal) torus bundle series with respect to $J$ (\cite{con-fin01, cfgu00}) or  a    stable torus bundle series  \cite{rollenske10},  then
the isomorphism  $H^{*,*}   (\lg, J)   \cong  H^{*,*} (M, J)$  holds. This class includes all complex parallelisable nilmanifolds.  
In addition the property of $\iota$ being an isomorphism is open in the space of left-invariant complex structures \cite{con-fin01} and holds after suitably enlarging the lattice \cite[Prop. 3.16]{rollenske10}. We are thus lead to conjecture that $\iota$ is an isomorphism on every complex nilmanifold. 
 
So far, only in real dimension four the conjecture had been verified, rather trivially, because there is only the Kodaira-Thurston manifold to consider. Using Salamons classification of six-dimensional nilpotent Lie algebras admitting a complex structure \cite{salamon01} the conjecture was verified in dimension six in \cite{rollenske09b} with the exception of $\lh_7$  with structure equations
\[
\left \{ \begin{array}{l} d e^j =0, j = 1,2,3,\\[2pt]
d e^4 = e^1 \wedge e^2,\\[2pt]
d e^5 = e^1 \wedge e^3,\\[2 pt]
d e^6 = e^2 \wedge e^3.
\end{array}
\right.
\]
The problem in this case is that for a general lattice and complex structure on $\lh_7$, the corresponding nilmanifold does not admit any holomorphic fibration onto a complex nilmanifold of lower dimension.

This paper grew out of the attempt to replace the role of a holomorphic fibration played in the previous proofs by holomorphic foliations. 
We show that in some cases the Dolbeault cohomology of a complex nilmanifold  $(\Gamma \backslash G, J)$  can be be computed even if the nilmanifold is not fibred but  foliated  in toroidal groups (Theorem \ref{thm: toroidal foliation}). As an application we prove that 
 the conjecture holds  also for a complex nilmanifold  $(\Gamma \backslash G, J)$  with $\lg \isom \lh_7$ and thus for every  6-dimensional  complex nilmanifold.

\subsubsection*{Acknowledgements}
The first author is supported   by GNSAGA of INdAM. The second and the third author are grateful for support of the DFG through the Emmy Noether program. The second author is also partially supported by the DFG through SFB 701. He would like to thank numerous people for discussions relating to this conjecture over the years, including Daniele Angella and Dan Popovici.

In the final stages of this project we have become aware that Adriano Tomassini and  Xu Wang are considering the same question from a more analytical view point by studying Laplacians with respect to a fibration.

\section{Preliminaries}

\subsection{Reminder on Dolbeault cohomology}\label{reminder}
Let $M$ be a differentiable manifold endowed with a complex structure $J.$ Recall that a  complex structure on a differentiable manifold $M$ is a vector bundle endomorphism $J$ of the tangent bundle which satisfies $J^2=-\id$ and  the integrability condition 
\begin{equation}\label{nijenhuis}
 [x,y]-[Jx,Jy]+J[Jx,y]+J[x,Jy]=0,
\end{equation}
for every pair of vector fields $x$ and $y$ on $M$. The endomorphism $J$ induces a decomposition of the complexified tangent bundle by letting pointwise $\einsnull TM\subset T_\IC M=T M\tensor\IC$ be the $\I$-eigenspace of $J$. Then the $-\I$-eigenspace is $\nulleins TM=\overline{\einsnull TM}$. Note that $\einsnull TM $ is naturally isomorphic to $(TM,J)$ as a complex vector bundle via the projection, and the integrability condition can be formulated as $[\einsnull TM, \einsnull TM]\subset \einsnull TM$.

The bundle of differential $k$-forms decomposes in the following way
\[\Wedge^kT^*_\IC M=\bigoplus_{p+q=k}\Wedge^p \einsnull{T^*}M\tensor \Wedge^q\nulleins{T^*}M=\bigoplus_{p+q=k}\Wedge^{p,q}T^*M,\]
and we denote by $\ka^{p,q}(M)$ the $\kc^\infty$-sections of the bundle $\Wedge^{p,q}T^*M$, i.e., the global differential forms of type $(p,q)$.

The integrability condition \eqref{nijenhuis} is equivalent to the decomposition of the differential $d=\del+\delbar$  and for all $p$ we get  the Dolbeault complex
\[(\ka^{p,\bullet}(M_J), \delbar): 0\to \ka^{p,0}(M)\overset\delbar\longrightarrow \ka^{p,1}(M)\overset\delbar\longrightarrow \dots\]
The Dolbeault cohomology groups $H^{p,q}(M)=H^q(\ka^{p,\bullet}(M), \delbar)$ are one of the most fundamental holomorphic invariants of $(M, J)$. From another point of view, the  Dolbeault complex computes the  cohomology groups of  the sheaf $\Omega^p_{M}$ of holomorphic $p$-forms.

In case $(M, J)$ is a  complex nilmanifold all of the above can be considered at the level of left-invariant forms. Decomposing $\lg^*_\IC=\einsnull{\lg^*}\oplus \nulleins{\lg^*}$ and setting $\Wedge^{p,q}\lg^*=\Wedge^p\einsnull{\lg^*}\tensor \Wedge^q\nulleins{\lg^*}$ we get subcomplexes
\begin{equation}\label{inclusion}
 (\Wedge^{p,\bullet}\lg^*, \delbar)\into (\ka^{p,\bullet}(M, J), \delbar). 
\end{equation}
In fact, the left hand side has a purely algebraic interpretation worked out in \cite{rollenske09a}: $\nulleins\lg$ is a Lie subalgebra of $\lg_\IC$ and the adjoint action followed by the projection to the $(1,0)$-part makes $\einsnull\lg$ into an $\nulleins\lg$-module. Then the complex $(\Wedge^{p,\bullet}\lg^*, \delbar)$ computes the Lie algebra cohomology of $\nulleins\lg$ with values in $\Wedge^p\einsnull{\lg^*}$ and we call
\[H^{p,q}(\lg, J)=H^q(\nulleins\lg, \Wedge^p\einsnull{\lg^*})=H^q(\Wedge^{p,\bullet}\lg^*, \delbar)\]
the Lie algebra Dolbeault cohomology of $(\lg, J)$. 

We can now formulate the analogue of Nomizu's theorem for Dolbeault cohomology as a conjecture.
\begin{custom}[Conjecture]\label{conj}
 Let $(M, J)$ be a nilmanifold with left-invariant complex structure. Then the map
\begin{equation*}
\phi_J: H^{p,q}(\lg, J)\to H^{p,q}(M,J)
\end{equation*}
induced by \eqref{inclusion} is an isomorphism.
\end{custom}
It is known that $\phi_J$ is always injective (see \cite{con-fin01} or \cite{rollenske09a}).

\subsection{Hausdorff quotients}

As we have to deal with cohomology groups which are not necessarily Hausdorff,
we make use of the following notation:

\begin{defin}
By a {\it quotient of a Fr{\'e}chet space}, in short, QF-space, we understand the quotient space
$E=F/A$ of a Fr{\'e}chet space $F$ by an arbitrary subspace $A\subset F$, carrying the induced
quotient topology. The Hausdorff quotient of $E$ is then $\overline{E}:= E/\overline{\{0\}} = F/\overline{A}$,
where $\overline{\{0\}}$ is the smallest closed subspace of $E$ and $\bar A$ is the closure of $A$.
\end{defin}

Note that a QF-space is a vector space with topology, but in general not a topological vector space,
because the definition of a topological vector space requires that points (particularly $\{0\}$) are closed.
Let us recall some simple but crucial observations about QF-spaces.

\begin{lem}\label{lem:QF1}
Let $E=F/A$ be a QF-space. Then the following are equivalent:
\begin{enumerate}
\item[(a)] $A$ is closed.
\item[(b)] $E$ is a Fr{\'e}chet space.
\item[(c)] $E$ is Hausdorff.
\item[(d)] $\{0\}$ is closed in $E$.
\end{enumerate}
\end{lem}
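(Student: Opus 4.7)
The plan is to prove the four conditions are equivalent by a cyclic chain of implications $(a)\Rightarrow(b)\Rightarrow(c)\Rightarrow(d)\Rightarrow(a)$, all of which come down to standard facts about quotient topologies on Fréchet spaces.

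For $(a)\Rightarrow(b)$ I would invoke the classical result that if $F$ is a Fréchet space and $A\subset F$ is a closed subspace, then $F/A$ equipped with the quotient topology is again a Fréchet space. Concretely, one may lift a defining sequence of seminorms $(p_n)$ on $F$ to seminorms $\tilde p_n([x]):=\inf_{a\in A}p_n(x+a)$ on $F/A$; closedness of $A$ ensures these separate points, and completeness of the resulting metric follows from a standard lifting argument for Cauchy sequences (one extracts a subsequence whose representatives form a Cauchy sequence in $F$).

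The implication $(b)\Rightarrow(c)$ is immediate since every Fréchet space is, by definition, a Hausdorff (metrizable) topological vector space, and $(c)\Rightarrow(d)$ simply uses that singletons are closed in any Hausdorff space. For $(d)\Rightarrow(a)$ I would note that the quotient map $\pi\colon F\to F/A$ is continuous by the definition of the quotient topology, and $A=\pi^{-1}(\{0\})$; the preimage of a closed set under a continuous map is closed, so $A$ is closed in $F$.

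No step presents a genuine obstacle: the only non-trivial ingredient is the completeness statement in $(a)\Rightarrow(b)$, which is a standard textbook fact about Fréchet spaces. The proof is essentially bookkeeping about the quotient topology, and the real purpose of the lemma is terminological—it justifies calling the passage from $E$ to $\overline E=E/\overline{\{0\}}$ the \emph{Hausdorff quotient}, since after dividing out $\overline{\{0\}}=\overline A/A$ the resulting space $F/\overline A$ is a Fréchet space by $(a)\Rightarrow(b)$.
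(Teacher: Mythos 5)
Your proof is correct and follows exactly the same cyclic chain $(a)\Rightarrow(b)\Rightarrow(c)\Rightarrow(d)\Rightarrow(a)$ as the paper, citing the same standard facts (closed subspace quotient of a Fréchet space is Fréchet, Fréchet implies Hausdorff, points closed in Hausdorff spaces, continuity of the projection for $(d)\Rightarrow(a)$). The extra detail you give on lifting seminorms is fine but not needed; the paper simply quotes the classical result.
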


\begin{proof}
$(a) \Rightarrow (b)$ It is well-known that $E=F/A$ is again a Fr{\'e}chet space if $A$ is closed.
$(b) \Rightarrow (c)$ A Fr{\'e}chet space is clearly Hausdorff.
$(c) \Rightarrow (d)$ In a Hausdorff space points are closed.
$(d) \Rightarrow (a)$ The projection $\pi: F \rightarrow F/A$ is by definition continuous.
So, if $\{0\}$ is closed in $E$, then $A=\pi^{-1}(\{0\})$ is closed in $F$.
\end{proof}

 \begin{lem} \label{lem: Hausdorff quotient for group cohomology}
  Let $\Lambda$ be a discrete group acting continuously on a QF-space $V$. Assume there is a finite-dimensional $\Lambda$-submodule $V_0$ of $V$ such that the map $V_0 \to \overline{ V} = V/\bar 0$ to the Hausdorff quotient is an isomorphism. 
  Then $V = \bar 0\oplus V_0$ splits as a $\Lambda$-module and taking the Hausdorff quotient commutes with group cohomology.
 \end{lem}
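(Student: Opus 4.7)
The plan is to separate the statement into two parts: first establish the $\Lambda$-module splitting, then deduce the cohomological compatibility.

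First I would note that $\bar 0$ is automatically a $\Lambda$-submodule, since each $\lambda\in\Lambda$ acts continuously and fixes $0$, hence preserves $\overline{\{0\}}$. The hypothesis that the composition $V_0 \hookrightarrow V \to V/\bar 0$ is an isomorphism then forces $V_0 \cap \bar 0 = 0$ and $V_0 + \bar 0 = V$, yielding the algebraic $\Lambda$-module splitting $V = \bar 0 \oplus V_0$. Since $V_0$ is finite-dimensional and $\overline V = V/\bar 0$ is Fr{\'e}chet, the bijection $V_0 \to \overline V$ is automatically a topological isomorphism, so the decomposition is continuous.

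Next I would compute $H^*(\Lambda,-)$ via the bar complex $C^n(\Lambda, V) = V^{\Lambda^n}$, equipped with the product topology inherited from the QF-structure on $V$. The topological module splitting lifts to a direct sum decomposition of cochain complexes, giving
\[H^*(\Lambda, V) = H^*(\Lambda, \bar 0) \oplus H^*(\Lambda, V_0) = H^*(\Lambda, \bar 0) \oplus H^*(\Lambda, \overline V).\]
The remaining task is to identify $H^*(\Lambda, \bar 0)$ with the closure of $\{0\}$ inside $H^*(\Lambda, V)$.

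The key topological observation is that $\bar 0 \subset V$ carries the \emph{indiscrete} subspace topology: writing $V = F/A$, denseness of $A$ in $\overline A$ forces every non-empty open subset of $\overline A$ to have image all of $\bar 0 = \overline A/A$. Passing to the product, $C^n(\Lambda, \bar 0)$ is itself indiscrete and in fact coincides with the closure of $\{0\}$ in $C^n(\Lambda, V)$; continuity of the projection to cohomology then embeds $H^n(\Lambda, \bar 0)$ into $\overline{\{0\}}$ inside $H^n(\Lambda, V)$. Conversely, because $V_0$ is finite-dimensional, $H^n(\Lambda, V_0)$ is Hausdorff, so the continuous projection $H^n(\Lambda, V) \to H^n(\Lambda, V_0)$ vanishes on $\overline{\{0\}}$, giving the reverse containment. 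Thus the Hausdorff quotient of $H^n(\Lambda, V)$ is $H^n(\Lambda, \overline V)$.

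The main technical subtlety is checking that on the finite-dimensional summand $V_0$ the coboundary subspace $B^n$ is closed in $Z^n$, equivalently that $H^*(\Lambda, V_0)$ really is Hausdorff. In the applications $\Lambda$ is a lattice in a simply-connected nilpotent Lie group and hence finitely generated, so $H^*(\Lambda, V_0)$ is finite-dimensional and the subtlety disappears.
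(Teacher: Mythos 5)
Your proof is correct and follows essentially the same route as the paper: split $V=\bar 0\oplus V_0$ using continuity of the $\Lambda$-action, deduce $H^*(\Lambda,V)=H^*(\Lambda,\bar 0)\oplus H^*(\Lambda,V_0)$, show the first summand has trivial Hausdorff quotient (your indiscrete-topology argument just makes explicit what the paper phrases as being a subquotient of a space with trivial Hausdorff quotient) and the second is Hausdorff because it is finite-dimensional. One small caution: finite generation of $\Lambda$ only controls low-degree cohomology, so for finite-dimensionality of $H^*(\Lambda,V_0)$ in all degrees you should instead invoke that in the applications $\Lambda$ is (after passing to a finite-index lattice) finitely generated torsion-free nilpotent, hence the fundamental group of a compact nilmanifold and of type $FP_\infty$, and then closedness of the coboundaries follows since a finite-codimensional continuous linear image of a Fr\'echet space is closed.
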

\begin{proof}
Since the action of $\Lambda$ is continuous the subspace $\bar 0$ is a submodule and thus we have the claimed splitting. 
 
Consequently, we have $H^k(\Lambda, V) = H^k(\Lambda, \bar 0)\oplus H^k(\Lambda, V_0)$. The second summand is Hausdorff, because it is finite-dimensional. The Hausdorff quotient of the first summand is trivial, because it is a subquotient of a topological vector space for which this holds. 
\end{proof}

\section{Complements on abelian complex Lie groups}\label{sect: toroidal groups}

We recall some properties of abelian complex Lie groups from \cite{abe-kopferman01}.
The following class of groups is of particular importance:
\begin{defin}
An abelian complex Lie group $F$ is called a toroidal group if $H^0(F, \ko_F)=\IC$, i.e.,   if all holomorphic functions are constant.
\end{defin}

Let $F$ be a connected abelian complex Lie group.
By \cite[1.1.2]{abe-kopferman01} there exists a discrete subgroup $\Gamma$ in the  complex vector space $\gothf = T_0 F$ such that $F= \gothf/\Gamma$. 

Let $\gothf_0$ be the largest complex subspace contained in the real subspace of $\gothf$  spanned by $\Gamma$. Then by \cite[Sect. 1]{abe-kopferman01} one can choose a basis for  $\gothf$ such that $\gothf_0$ is spanned by the last $q$ coordinate vectors and $\Gamma$ is spanned by the columns of a block matrix of the form  
\begin{equation}\label{eq: period matrix}
\begin{pmatrix} 
0 & 0 &  0 \\
I_b & 0 & 0\\
0 & I_{n-q} & R\\
0 & 0& P\end{pmatrix}
\end{equation}
where $P\in \mathrm{Mat}(q, 2q, \IC)$ is the period matrix of a compact complex torus and $R \in \mathrm{Mat}(n-q, 2q, \IR)$ is a real matrix, called glueing matrix, satisfying the irrationality condition 
\[ \forall \sigma \in \IZ^{n-q}\colon \sigma^tR\not \in \IZ^{2q}.\]
Then 
\[ T = \IC^n / \begin{pmatrix} 
 I_{n-q} & R\\
 0& P\end{pmatrix}\IZ^{n+q}\]
is a toroidal group and $q$ is called the rank of $T$. The above choice of basis induces a $(\IC^*)^{n-q}$-principal bundle structure $T\to F_0 = \gothf_0/P\IZ^{2q}$ over a compact complex torus. 

By abuse of notation we call such a coordinate system or sometimes just the splitting $\gothf = \gothk\oplus \gothf_0$ induced by it a toroidal coordinate system for $F$. 

As a consequence we get: 
\begin{prop}[\protect{Remmert, Morimoto (\cite{MR0207893}, \cite[1.1.5]{abe-kopferman01})}]\label{prop: structure abelian cx Lie-groups}
 The toroidal coordinate system induces a biholomorphism $F\isom \IC^a \times (\IC^*)^b \times T$, and the factors of this decomposition are uniquely determined. 
\end{prop}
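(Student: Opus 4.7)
The claim has two parts, existence of the decomposition and uniqueness of the factors, and I would treat them separately.

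For existence the decomposition is essentially read off from the period matrix \eqref{eq: period matrix}. Grouping the last two row blocks together, write $\gothf = \IC^a \oplus \IC^b \oplus \gothf_T$ with $\gothf_T = \IC^{n-q}\oplus \IC^q$. The columns of \eqref{eq: period matrix} show that $\Gamma$ is contained in $\{0\}\oplus \IC^b \oplus \gothf_T$ and splits as a direct sum $\{0\}\oplus \IZ^b \oplus \Gamma_T$, where $\Gamma_T$ is the lattice in $\gothf_T$ generated by the last $n+q$ columns. Therefore
\[ F = \gothf/\Gamma \,\isom\, \IC^a \times (\IC^b/\IZ^b)\times (\gothf_T/\Gamma_T),\]
and the covering $z\mapsto (e^{2\pi\I z_j})_{j=1}^b$ identifies $\IC^b/\IZ^b$ with $(\IC^*)^b$, while $T := \gothf_T/\Gamma_T$ is toroidal by the irrationality condition on $R$.

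For uniqueness I would characterise each factor intrinsically. The toroidal subgroup $T \subset F$ can be described as the maximal connected closed complex Lie subgroup of $F$ on which every global holomorphic function is constant: any such subgroup must land in $T$, since $\IC^a$ and $(\IC^*)^b$ admit point-separating holomorphic functions, while $T$ itself has this property by definition of toroidal. The quotient $F/T \isom \IC^a \times (\IC^*)^b$ is then a Stein abelian complex Lie group, and its invariants are determined by $b = \mathrm{rank}\,\pi_1(F/T)$ together with $a = \dim_\IC(F/T) - b$.

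The main step is the splitting of the extension
\[ 0 \to T \to F \to F/T \to 0 \]
as complex Lie groups. A holomorphic section is obtained directly from the toroidal coordinates: the complementary subspace $\IC^a \oplus \IC^b \subset \gothf$ descends through the quotient $\gothf \to F$ to a closed complex Lie subgroup isomorphic to $\IC^a \times (\IC^*)^b$. Uniqueness of the splitting reduces to the vanishing of holomorphic Lie-group homomorphisms from $\IC^a \times (\IC^*)^b$ into $T$, which is immediate because $H^0(T,\ko_T) = \IC$: any such homomorphism would have holomorphic coordinate functions that are constant on $T$, forcing triviality.
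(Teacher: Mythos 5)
Your existence argument and your first uniqueness paragraph are correct, and they are essentially the route the paper has in mind (the paper gives no proof of its own: it quotes Remmert--Morimoto/Abe--Kopfermann, the decomposition being read off from the normal form \eqref{eq: period matrix} exactly as you do). Indeed the period matrix splits the lattice as $\{0\}\oplus\IZ^b\oplus\Gamma_T$, giving $F\isom\IC^a\times(\IC^b/\IZ^b)\times T\isom\IC^a\times(\IC^*)^b\times T$, and your intrinsic characterisation of $T$ -- any connected complex subgroup with only constant holomorphic functions is forced into $T$ by restricting the coordinate functions of $\IC^a\times(\IC^*)^b$, while $T$ itself is such a subgroup -- together with $b=\mathrm{rank}\,\pi_1(F/T)$ and $a=\dim_\IC(F/T)-b$, is a complete proof that the factors are uniquely determined. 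At that point you are done.

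The final paragraph, which you call the main step, is both unnecessary and wrong. Holomorphic Lie-group homomorphisms $\IC^a\times(\IC^*)^b\to T$ do not vanish: every $v\in\mathrm{Lie}(T)=\IC^n$ gives a one-parameter subgroup $z\mapsto\exp(zv)$ (for abelian groups this is just the quotient map $\IC^n\to T$ restricted to a line), and $T$ even contains $(\IC^*)^{n-q}$ as the fibre of $T\to F_0$. The hypothesis $H^0(T,\ko_T)=\IC$ kills homomorphisms \emph{from} $T$ \emph{into} $\IC^a\times(\IC^*)^b$ (pull back the coordinate functions and use that they are constant on $T$); your argument applies it in the opposite direction. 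As a consequence the splitting of $0\to T\to F\to F/T\to 0$ is genuinely \emph{not} unique: the graph of any nontrivial homomorphism $F/T\to T$ is a closed complex subgroup giving a different complement. Fortunately the proposition claims only uniqueness of the factors (the integers $a,b$ and $T$, the latter being even unique as a subgroup of $F$), which your intrinsic characterisation already delivers; simply delete the last paragraph rather than trying to repair it.
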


In the toroidal case there is an important distinction found by Vogt \cite{vogt82} that can be detected by Dolbeault cohomology.
\begin{thm}\protect{\cite[Sect. 2.2]{abe-kopferman01}}  \label{thm: vogt} Let $F$ be a toroidal group and $\pi\colon F\to F_0$ the principal bundle structure induced by a choice of toroidal coordinates $\gothf = \gothk\oplus \gothf_0$ with period matrix as in \eqref{eq: period matrix}.

Then the following properties are equivalent:
\begin{enumerate}
\item The glueing matrix $R$ satisfies the condition
\[ \exists r\in \IR^{>0}\, \forall \sigma \in \IZ^{n-q}\setminus\{0\}\colon r^{-|\sigma|}\leq \text{dist}(R^t\sigma, \IZ^{2q}) = 
\inf_{\tau \in \IZ^{2q}} \left| R^t\sigma - \tau \right|.\]
 \item $H^{0,1}(F)$ is finite-dimensional.
 \item $H^{p,q}(F)$ is finite-dimensional for all $p,q$.
 \item For all $q$ the projection $\pi$ induces an isomorphism $\pi^*\colon H^{0,q}(F_0) \to H^{0,q}(F)$.
 \item For all $p,q$ we have
 \[ H^{p,q}(F) \isom \Wedge^p\gothf^*\tensor \Wedge^q\bar \gothf_0^*.\]
\item Every line bundle on $F$ is a theta line bundle (see \cite[Sect. 2.1]{abe-kopferman01} for this notion).
 \end{enumerate}
If one of these conditions is satisfied, then we call   $F$  a toroidal theta group.  Otherwise we call $F$ a toroidal wild group. 
\end{thm}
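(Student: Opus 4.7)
The plan is to exploit the principal bundle structure $\pi\colon F \to F_0$ and reduce everything to Fourier analysis along the fibres. The universal cover of $F$ is $\IC^n = \IC^{n-q}_z\times \IC^q_w$, and every smooth $(p,q)$-form lifts to a $\Gamma$-invariant form there. Because the first $n-q$ columns of the period matrix are $(I_{n-q},0)^t$, the real parts of the $z$-coordinates are strictly periodic, so every coefficient admits a Fourier expansion of the form $\sum_{\sigma \in \IZ^{n-q}} c_\sigma(\mathrm{Im}\,z, w)\, e^{2\pi \I \sigma^t \mathrm{Re}\,z}$. The remaining lattice vectors $(R;P)^t\tau$, $\tau \in \IZ^{2q}$, impose a quasi-periodicity in $w$ that couples with the mode $c_\sigma$ only through the phase $e^{2\pi \I \sigma^t R \tau}$.

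The easy implications come first. Condition (5) implies (3) since each graded piece is finite-dimensional, (3) implies (2) by specialisation to $(p,q)=(0,1)$, and (5) implies (4) because setting $p=0$ recovers precisely $\Wedge^q \bar \gothf_0^* \isom H^{0,q}(F_0)$ via $\pi^*$. Conversely (4) implies (2) since $F_0$ is a compact complex torus with finite-dimensional Dolbeault cohomology. So it suffices to establish the equivalence (1) $\Leftrightarrow$ (2) and then refine the resulting isomorphism to the explicit form in (5); (4) then drops out as the $p=0$ case.

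The analytic heart is therefore (1) $\Leftrightarrow$ (2). For each Fourier mode $\sigma \neq 0$ the $\delbar$-equation becomes an elliptic system on $F_0$ twisted by the character $\tau \mapsto e^{2\pi \I \sigma^t R \tau}$; a Hodge-theoretic inversion has operator norm controlled essentially by $\mathrm{dist}(R^t\sigma, \IZ^{2q})^{-1}$. Under (1) this grows at most polynomially in $|\sigma|$, so summing mode-wise potentials in suitable Sobolev norms produces a $\kc^\infty$-solution; the only obstruction is the zero mode, which lives on $F_0$, and this gives (5) directly. If (1) fails, a sequence $\sigma_k$ with $\mathrm{dist}(R^t\sigma_k, \IZ^{2q})$ decaying faster than any polynomial fuels a small-divisor construction: a $\delbar$-closed $(0,1)$-form concentrated on these modes whose formal potential leaves every Sobolev space. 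Varying the construction over independent $\sigma_k$ produces infinitely many linearly independent classes in $H^{0,1}(F)$, contradicting (2).

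Finally, (1) $\Leftrightarrow$ (6) comes from an Appell--Humbert type description of line bundles on $F$: factors of automorphy are classified up to equivalence by characters of $\Gamma$, and the Diophantine condition on $R$ is precisely what is needed for every such character to be realised by a Hermitian form on $\gothf$, i.e.~for every line bundle to be a theta line bundle. The central obstacle throughout is the sharp analytic step: obtaining uniform mode-by-mode solvability estimates under (1) and, conversely, showing that their failure is unavoidable in the wild case through an explicit small-divisor construction that survives passage to the cohomology.
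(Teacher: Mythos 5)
The paper offers no proof of this statement at all: it is quoted from Abe--Kopfermann \cite[Sect.\ 2.2]{abe-kopferman01} (Vogt's theorem), so your sketch has to stand on its own, and as it stands it has a genuine gap at exactly the analytic heart you flag. Condition (1) does \emph{not} give polynomial control of the small divisors: it only guarantees $\mathrm{dist}(R^t\sigma,\IZ^{2q})\geq r^{-|\sigma|}$, so the mode-wise inversion constants may grow \emph{exponentially} in $|\sigma|$. Since the Fourier coefficients of a general smooth form merely decay faster than any polynomial, your step ``summing mode-wise potentials in suitable Sobolev norms produces a $\kc^\infty$-solution'' does not converge as stated; with the inputs you actually use you would only prove the implication under a polynomial (Diophantine-type) condition, which is strictly stronger than (1), so the stated equivalence would not follow. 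The real proof must exploit that for a $\delbar$-closed form the coefficients of the nonzero fibre modes depend holomorphically (Laurent-type) on the non-compact $(\IC^*)^{n-q}$-directions, which gives \emph{exponential} decay of the relevant Fourier coefficients on compact sets and is what allows the exponentially small divisors permitted by (1) to be absorbed; this is the content of the key estimates in \cite[Prop.\ 2.2.5]{abe-kopferman01}. Relatedly, your reduction of each mode to ``an elliptic system on $F_0$ twisted by a character'' ignores that $c_\sigma$ still depends on the non-compact variable $\mathrm{Im}\,z$, which has to be handled (again via the explicit exponential behaviour forced by $\delbar$-closedness) before any Hodge theory on $F_0$ can be invoked.

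The converse direction and (1) $\Leftrightarrow$ (6) are likewise asserted rather than proved: exhibiting modes whose formal potentials diverge does not yet give nonzero classes in $H^{0,1}(F)$, let alone infinitely many independent ones --- one must exclude \emph{all} potentials, which in \cite{abe-kopferman01} requires a separate argument --- and the line-bundle statement rests on the classification of automorphic factors on toroidal groups (theta versus wild factors), not just an Appell--Humbert formalism. The easy implications (5)$\Rightarrow$(3)$\Rightarrow$(2), (5)$\Rightarrow$(4)$\Rightarrow$(2) are fine. Since the theorem is in any case imported from the literature, the efficient course is to cite \cite[Sect.\ 2.2]{abe-kopferman01} as the paper does; if you want a self-contained proof, the exponential-decay estimate for the Fourier coefficients of $\delbar$-closed forms is the missing ingredient you must supply.
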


\begin{exam}\label{ex: theta and wild in dim two} 
Let us give examples of toroidal theta and wild groups applying the first equivalent condition.

Consider for $a\in \IR$ the two-dimensional abelian complex Lie-group $F_a$ with 
$\Gamma$ given in toroidal coordinates by
\[\begin{pmatrix}
   1& 0& a\\ 0 & 1& \I
  \end{pmatrix}
  \]
Then projection to the second coordinate induces a $\IC^*$-principal bundle over the elliptic curve $\IC/\IZ[\I]$ and by the irrationality condition $F_a$ is toroidal if and only if $a\not\in \IQ$.

Applying  condition \refenum{i} of Theorem \ref{thm: vogt} to this period matrix we deduce that  $F_a$ is a toroidal theta group if and only if there is a positive number $r\in \IR$ such that 
\begin{equation}\label{eq: theta condition dim 2}  {r^{-|\sigma|}}\inverse{|\sigma|}\leq \left| a - \frac\tau\sigma \right| \quad\text{for all $\frac{\tau}\sigma \in \IQ$}.
\end{equation}

By a Theorem of Liouville \cite{Liouville}, algebraic numbers cannot be approximated well by rational numbers, that is, for every algebraic number $a$ with $[\IQ(a):\IQ] = d\geq 2$ there is a positive constant $c$ such that 
\[  \frac{c}{|\sigma|^d}\leq \left| a - \frac\tau\sigma \right| \quad\text{for all $\frac{\tau}\sigma \in \IQ$}.\]
From this it is easy to see that for every algebraic number $a$ the  inequality \eqref{eq: theta condition dim 2}  can be satisfied for  some constant $r$ and thus $F_a$ is a toroidal theta group.

On the other hand, choosing for $a$ a transcendental number which is very well approximated by rational numbers such as 
\[a = \sum_{k = 1}^\infty \frac1{10 ^{10 ^{k!}}}\]
then one can easily see that \eqref{eq: theta condition dim 2} cannot be satisfied for any $r$. 
\end{exam}

\begin{prop}\label{prop: Hausdorff quotients of Dolbeault cohomology for abelian Lie groups}
 Let $F = \gothf/\Gamma$ be a toroidal group. 
 Choose a toroidal coordinate system inducing a splitting $\gothf = \gothk\oplus \gothf_0$. 
 Then the inclusion $\Wedge^p\gothf^*\tensor \Wedge^q\bar \gothf_0^*\subset H^{p,q}(F)$ induces a splitting 
  \begin{equation}\label{eq: Dolbeault decomposition}
 H^{p,q}(F) = \left(\Wedge^p\gothf^*\tensor \Wedge^q\bar \gothf_0^*\right)\oplus \bar 0,  
  \end{equation}
  that is $\Wedge^p\gothf^*\tensor \Wedge^q\bar \gothf_0^*$ is naturally isomorphic to the Hausdorff quotient of the Dolbeault cohmology group $H^{p,q}(F)$. Here, $H^{p,q}(F)$ carries the usual topology as the quotient of a Fr\'echet space of smooth forms.
\end{prop}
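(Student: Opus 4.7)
\medskip
\noindent\emph{Proof plan.}
My plan is to reduce to the case $p=0$ using the abelian structure of $\gothf$ and then to analyse $H^{0,q}(F)$ via a Fourier decomposition under the maximal compact subgroup $K=(S^1)^{n-q}$ of the fibre of the principal bundle $\pi\colon F\to F_0$. Since $\gothf$ is abelian, every left-invariant form is $\delbar$-closed, so the trivialisation of $T^*F$ by left-invariant forms realises the Dolbeault complex as $(\ka^{p,\bullet}(F),\delbar)\isom \Wedge^p\einsnull{\gothf^*}\tensor(\ka^{0,\bullet}(F),\delbar)$. Cohomology and Hausdorff quotient commute with the finite-dimensional tensor factor, so it is enough to show $H^{0,q}(F)=\Wedge^q\nulleins{\gothf_0^*}\oplus \bar 0$.

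Averaging over $K$ gives a continuous $\delbar$-chain projection $P\colon \ka^{0,\bullet}(F)\to \ka^{0,\bullet}(F)^K$. In toroidal coordinates $w=u+\I v$, $z$, the $K$-invariant forms are precisely those whose coefficients depend only on $(v,z)$, and under the smooth identification $F/K\cong \IR^{n-q}\times F_0$ the invariant subcomplex is isomorphic to the tensor product of the de Rham complex of $\IR^{n-q}$ (written in the $d\bar w_j$-basis) with the Dolbeault complex of $F_0$. A K\"unneth computation, using $H^\bullet_{\mathrm{dR}}(\IR^{n-q})=\IC$ in degree zero and $H^{0,\bullet}(F_0)=\Wedge^\bullet\nulleins{\gothf_0^*}$, then gives $H^{0,q}(\ka^{0,\bullet}(F)^K)\isom \Wedge^q\nulleins{\gothf_0^*}$, represented by left-invariant classes. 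The inclusion of the invariant subcomplex and $P$ induce a splitting $H^{0,q}(F)=\Wedge^q\nulleins{\gothf_0^*}\oplus \ker P_\ast$, and $\bar 0\subseteq \ker P_\ast$ is automatic since $P_\ast$ has Hausdorff target.

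It remains to establish the reverse inclusion $\ker P_\ast\subseteq \bar 0$. Fourier-expand $\omega=\sum_\sigma \omega_\sigma$ under the $K$-action; a class in $\ker P_\ast$ has a representative with $P\omega=0$ (adjust by $\delbar$ of an invariant form, which is available by Hausdorffness of the invariant cohomology), so only modes $\sigma\neq 0$ occur. After the substitution $f_\sigma = e^{-2\pi\sigma v}g_\sigma$ each isotypic subcomplex is isomorphic to the tensor product of the de Rham complex of $\IR^{n-q}$ with the Dolbeault complex of the Hermitian flat line bundle $L_\sigma$ on $F_0$ with transition cocycle $e^{-2\pi\I\sigma\cdot R^k}$. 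By the irrationality of the glueing matrix $R$, $L_\sigma$ is holomorphically non-trivial for all $\sigma\neq 0$, so the classical vanishing for non-trivial unitary flat line bundles on a compact complex torus gives $H^j(F_0,L_\sigma)=0$ for all $j$. Hence the isotypic cohomology vanishes, each $\omega_\sigma$ is $\delbar$-exact, and the finite truncations $\omega_N=\sum_{0<|\sigma|\leq N}\omega_\sigma$ are $\delbar$-exact on $F$; since Fourier series of smooth functions in a compact-torus direction converge in the Fr\'echet $C^\infty$-topology, $\omega_N\to \omega$ and so $[\omega]\in\bar 0$. I expect the main obstacle to be the vanishing $H^j(F_0,L_\sigma)=0$ for $\sigma\neq 0$, where the irrationality condition on $R$ is essential; the rest --- continuity of the averaging projection, K\"unneth for the relevant Fr\'echet complexes, and rapid decay of Fourier coefficients of smooth functions --- should follow from standard arguments once the setup is in place.
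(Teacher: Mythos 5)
Your argument is correct in outline, but it takes a genuinely different route from the paper, whose proof is essentially by reference: in the theta case the statement \emph{is} Vogt's theorem (condition (5) of Theorem \ref{thm: vogt}), and in the wild case the paper extracts from the proof of \cite[Prop.\ 2.2.5]{abe-kopferman01} the claim that, modulo $\overline{B^{0,q}(F)}$, every $\delbar$-closed form is cohomologous to a constant-coefficient form --- i.e.\ a full Fourier expansion over $\IZ^{n+q}$ with respect to the maximal real torus of $F$, exactness of each non-zero mode, and convergence of the partial sums --- and then quotes \cite[Thm.\ 2.2.6]{abe-kopferman01} to identify the constant-coefficient cohomology with $\Wedge^p\gothf^*\otimes\Wedge^q\bar\gothf_0^*$. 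You instead expand only with respect to the compact fibre torus $(S^1)^{n-q}$ of the $(\IC^*)^{n-q}$-bundle $F\to F_0$, identify the invariant subcomplex with the de Rham complex of $\IR^{n-q}$ tensored with the Dolbeault complex of $F_0$, and dispose of the non-invariant isotypic pieces via the classical vanishing $H^j(F_0,L_\sigma)=0$ for the non-trivial flat unitary line bundles produced by the irrationality of $R$; this is self-contained, treats the theta and wild cases uniformly instead of separately, and replaces Abe--Kopfermann's explicit mode-by-mode solution of the $\delbar$-equation by a structural $\mathrm{Pic}^0$-vanishing on the compact torus. At bottom the mechanism is the same, and your decisive last step --- finite partial sums of exact forms converge in the Fr\'echet topology with no control on primitives, so the class lies in $\bar 0$ --- is exactly the point the paper borrows from the proof of Vogt's theorem. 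What you still owe are the functional-analytic details you defer: continuity of the averaging projection and of the homotopy operators (the parametrized Poincar\'e lemma in the $v$-directions and the Green operator on $(F_0,L_\sigma)$), which is what legitimises the K\"unneth-type identifications for these Fr\'echet complexes and makes the invariant cohomology Hausdorff, so that $\bar 0\subseteq\ker P_*$; also note that choosing a representative with $P\omega=0$ needs no Hausdorffness at all, since $P_*[\omega]=0$ already means $P\omega$ is exact in the invariant subcomplex.
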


\begin{rem}
Proposition
\ref{prop: Hausdorff quotients of Dolbeault cohomology for abelian Lie groups} can be generalized to abelian complex Lie groups.
In that case, the factor $\IC^a \times (\IC^*)^b$
(see Proposition \ref{prop: structure abelian cx Lie-groups})
has to be incorporated by means of the K\"unneth formula.
We do here, however, only need the statement for toroidal groups.
\end{rem}

\begin{proof}
In the toroidal theta case this is just Vogt's Theorem (see Theorem \ref{thm: vogt} above).

The toroidal wild case follows from the proof of Vogt's Theorem. We will explain that now.
Let $Z^{p,q}(F) = \ker \dq \subset \ka^{p,q}(F)$ and $B^{p,q}(F) = \im \dq \subset \ka^{p,q}(F)$,
where $\ka^{p,q}(F)$ carries the usual structure as the Fr\'echet space of smooth forms.
Hence, $H^{p,q}(F) = Z^{p,q}(F) / B^{p,q}(F)$ by definition, and \eqref{eq: Dolbeault decomposition}
is equivalent to
\begin{eqnarray}\label{eq:quotient01}
\frac{Z^{p,q}(F)}{\overline{B^{p,q}(F)}} = \Wedge^p\gothf^*\tensor \Wedge^q\bar \gothf_0^*.
\end{eqnarray}
To this end, we show the following:

\medskip
{\bf Claim:} {\it Modulo $\overline{B^{0,q}(F)}$ every form $\omega\in Z^{0,q}(F)$ is cohomologous 
to a form with constant coefficients (in toroidal coordinates).}
 
 \medskip
 To show the claim, we recall some main steps of the proof of \cite[Prop. 2.2.5]{abe-kopferman01}. 
 We only need to consider the case $q>0$ as holomorphic functions are constant on toroidal groups.
 So, let
 $\omega$ be a $\dq$-closed $\Gamma$-periodic $(0,q)$-form on $\gothf\cong \IC^n$, $q>0$.
 Then 
 \begin{eqnarray}\label{eq:quotient02}
 \omega &=& \sum_{\sigma\in \IZ^{n+q}} \omega^{(\sigma)},
 \end{eqnarray}
 where $\omega^{(0)}$ is a $(0,q)$-form with constant coefficients in toroidal coordinates
 and the sum, coming from a Fourier expansion, converges in the $\ka^{p,q}$-topology
 (see also \cite[Prop. 2.2.7]{abe-kopferman01}). The $\omega^{(\sigma)}$ are $\Gamma$-periodic. 
 But
 \begin{eqnarray*}
 \omega^{(\sigma)} &=& \dq \eta^{(\sigma)}
 \end{eqnarray*}
 with smooth $\Gamma$-periodic $(0,q-1)$-forms $\eta^{(\sigma)}$.
 Thus
 \begin{eqnarray*}
 \omega - \omega^{(0)} &=& \lim_{N\rightarrow \infty} \sum_{\substack{0\neq \sigma\in \IZ^{n+q}\\ |\sigma| \leq N}} \omega^{(\sigma)}
 = \lim_{N\rightarrow \infty} \dq \sum_{\substack{0\neq \sigma\in \IZ^{n+q}\\ |\sigma| \leq N}} \eta^{(\sigma)}
 \end{eqnarray*} 
 This statement, contained in the proof of \cite[Prop. 2.2.5]{abe-kopferman01}, proves our claim.
 The difference between toroidal theta and wild groups appears here as the dichotomy
 whether the series $\sum_{\sigma\neq 0} \eta^{(\sigma)}$ converges (theta case, see \cite[Prop. 2.2.5]{abe-kopferman01})
 or not (wild case, see \cite[Prop. 2.2.7]{abe-kopferman01}).
 
 \medskip
 The identity \eqref{eq:quotient01} follows now precisely as \cite[Theorem 2.2.6]{abe-kopferman01}:
 just use our claim above instead of \cite[Prop. 2.2.5]{abe-kopferman01}.
 \end{proof}

\section{Nilmanifolds foliated in toroidal groups }\label{sect: main}
We now show how in some cases Dolbeault cohomology of a nilmanifold with left-invariant complex structure can be computed even if it not fibred but only foliated.
\subsection{Setting and  main theorem}

 Let $G$ be a simply-connected real nilpotent Lie group with Lie algebra $\lg$  and with  an integrable almost complex structure $J\colon \lg\to \lg$.  Assume that $G$ contains a lattice $\Gamma$ and denote by $M=\Gamma\backslash G$ the corresponding complex  nilmanifold  with invariant complex structure. 
 
Assume $\gothf\subset \lg$ is an abelian $J$-invariant ideal. We denote the corresponding closed subgroup of $G$ again with $\gothf\subset G$ (since the exponential map preserves the group structure) and let $p \colon G\to H = \gothf\backslash G$ be the natural projection and $\lh$ be the Lie algebra of $H$, which carries an induced complex structure.

The projection $p$ induces a fibration on $M$ if and only if  $\Gamma_\gothf = \Gamma\cap \gothf$ is a cocompact lattice in $\gothf$; otherwise the translates of $F = \Gamma_\gothf\backslash\gothf$ are the leaves of a holomorphic foliation on $M$.

We assume the following:
\begin{enumerate}
 \item There is a subgroup $\Gamma_\gothf \subset \Gamma' \subset \Gamma$ such that $\Gamma_H = p(\Gamma')$ is a discrete cocompact subgroup of $H$.
 
 We denote the quotient by $B' = \Gamma_H\backslash H$ and set $\Lambda = \Gamma/\Gamma'$.
 \item $F = \Gamma_\gothf\backslash \gothf$ is a non-compact abelian complex Lie group.\footnote{Note that $F$ is compact if and only if $p$ induces a fibration on $M$ if and only if $\gothf$ is a $\Gamma$-rational subspace of $\lg$. This case is treated in \cite{con-fin01}.}  
 \end{enumerate}
We then get a diagram
\begin{equation}\label{diag: toroidal foliation}
\begin{tikzcd}
   \gothf \rar\arrow{dr} & F \arrow{dr} \arrow[equal]{r}& F 
   \arrow{dr}\\
    & G\dar{p}\rar{\tilde\pi} &M'\dar{p'} \rar{f}[swap]{/\Lambda} &M\\
    & H \rar& B'  \\
  \end{tikzcd} 
\end{equation}
with the following properties:
\begin{enumerate}
\item $p'\colon M':=\Gamma'\backslash G \to B'$ is a locally trivial, holomorphic submersion with fibre $F$ over a compact complex nilmanifold; $p'$ is not proper.
\item The translates of $F = \Gamma_\gothf\backslash\gothf$, i.\,e., the images of  fibres of $p'$, are the leaves of a holomorphic foliation on $M$ with leaf-space $B=\Lambda\backslash B'$. 
  \item On $M'$ we have the residual action of $\Lambda = \Gamma/\Gamma'$ and the quotient map $f$ is unramified.
\end{enumerate}

In such a situation one can compute the Dolbeault cohomology of $M$ 
by a \emph{composition} of two spectral sequences, namely one can first compute the Dolbeault cohomology of $M'$ via the Leray spectral sequence
\begin{equation}\label{eq: Leray for p'}H^r(B', R^sp'_*\Omega_{M'}^p) \implies H^{r+s}(\Omega^p_{M'}),
   \end{equation}
and then use the result in the equivariant sheaf cohomology spectral sequence for $f$ \cite[subsection 5.2]{Grothendieck-Tohoku} 

 \begin{equation}\label{eq: equivariant cohomology spectral sequence} H^r(\Lambda, H^s(M', \Omega_{M'}^p))\implies H^{r+s}_\Lambda (M', \Omega_{M'}^p)\isom H^{r+s}(M, \Omega_M^p) = H^{p, r+s}(M).
 \end{equation}
The idea of the following result is to compare these to analogous spectral sequences on the level of left-invariant forms, that is,
from Lie algebra cohomology.
 
Let $\gothf_0 \subset \Gamma_\gothf\tensor \IR \subset \gothf$ be the largest complex subspace of the real subspace spanned by $\Gamma_\gothf$ in $\gothf$. 
\begin{thm}\label{thm: toroidal foliation}
In the above situation we choose toroidal coordinates $\gothf=\gothk \oplus {\gothf_0}$
on $F$ (in the sense of Section \ref{sect: toroidal groups}).

Assume that Conjecture \ref{conj} holds for the complex nilmanifold $B'$ and that there is a $J$-invariant subalgebra $\lg_0^{0,1}\subset \lg^{0,1}$ fitting in the exact diagram
\begin{equation}\label{diag: lie algebras}
 \begin{tikzcd}
  {} & 0\dar & 0\dar\\
  0\rar& \nulleins{\gothf_0}\rar\dar & \nulleins{\gothg_0}\rar\dar & \nulleins\lh \dar[equal]\rar & 0\\
   0\rar& \nulleins\gothf\rar\dar{q} & \nulleins\gothg\rar \dar& \nulleins\lh \rar & 0\\
   & \nulleins \gothk \dar\rar[equal] & \nulleins \gothk \dar\\&0&0
 \end{tikzcd}
\end{equation}
Assume further that $F$ is a toroidal group.
Then Conjecture \ref{conj} holds for $M$.
\end{thm}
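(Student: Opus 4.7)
The plan is to establish the surjectivity of $\phi_J$ (injectivity being already known) by running, in parallel, the two geometric spectral sequences \eqref{eq: Leray for p'}--\eqref{eq: equivariant cohomology spectral sequence} and algebraic counterparts built out of diagram \eqref{diag: lie algebras}, and matching them at the level of $E_2$-pages after passing to Hausdorff quotients. The intermediate manifold $M'$ of \eqref{diag: toroidal foliation} plays the role of converting a foliation on $M$ into an honest fibration where the Leray spectral sequence can be applied; the residual quotient by $\Lambda=\Gamma/\Gamma'$ is then handled via the equivariant spectral sequence and Lemma \ref{lem: Hausdorff quotient for group cohomology}.

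For the Leray step: since $p'\colon M'\to B'$ is a principal $F$-bundle, the sheaf $\Omega^p_{M'}$ carries a canonical filtration with graded pieces $(p')^*\Omega^i_{B'}\otimes \Wedge^j\gothf^*$ for $i+j=p$. Correspondingly, the associated graded of $R^sp'_*\Omega^p_{M'}$ is, by base change, built out of $\Omega^i_{B'}\otimes \mathcal{H}^{j,s}$, where $\mathcal{H}^{j,s}$ is the locally constant sheaf of QF-spaces on $B'$ with stalk $H^{j,s}(F)$. Proposition \ref{prop: Hausdorff quotients of Dolbeault cohomology for abelian Lie groups} identifies the Hausdorff quotient of this stalk with the finite-dimensional space $\Wedge^j\gothf^*\otimes \Wedge^s\bar\gothf_0^*$, realised by forms constant in toroidal coordinates on $F$. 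Combined with the assumption that the conjecture holds for $B'$, the Hausdorff quotient of the Leray $E_2$-page takes the form
\[
\overline{E_2^{r,s}}\ \cong\ \bigoplus_{i+j=p}H^r(\nulleins{\lh},\Wedge^i\einsnull{\lh^*})\otimes \Wedge^j\gothf^*\otimes \Wedge^s\bar\gothf_0^*.
\]

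The algebraic counterpart is the Hochschild--Serre spectral sequence associated to the top row $0\to \nulleins{\gothf}\to \nulleins{\lg}\to \nulleins{\lh}\to 0$ of \eqref{diag: lie algebras}, converging to $H^{p,r+s}(\lg,J)$. Its inner cohomology $H^s(\nulleins{\gothf},\Wedge^p\einsnull{\lg^*})$ is computed by the Chevalley--Eilenberg complex for the abelian Lie algebra $\nulleins{\gothf}$ and is in general strictly larger than the analogue of the above Hausdorff quotient. This is where the subalgebra $\nulleins{\gothg_0}$ and the splitting $\nulleins{\gothf}=\nulleins{\gothf_0}\oplus\nulleins{\gothk}$ enter: they single out, $\nulleins{\lh}$-equivariantly, a finite-dimensional subspace isomorphic to $\Wedge^s\nulleins{\gothf_0^*}\otimes \Wedge^p\einsnull{\lg^*}$ that mirrors the theta part on the geometric side, its complement serving as the algebraic counterpart of the non-Hausdorff subspace $\bar 0$. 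After applying the conjecture for $B'$ to the outer cohomology, this matches the algebraic $E_2$-page with $\overline{E_2^{r,s}}$ above, yielding an isomorphism $H^{p,q}(\lg,J)\cong \overline{H^{p,q}(M',J)}$ that is compatible with the residual action of $\Lambda$.

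To descend from $M'$ to $M$, Lemma \ref{lem: Hausdorff quotient for group cohomology} applies with $V=H^{p,s}(M',J)$ and $V_0=H^{p,s}(\lg,J)$: the previous step ensures that $V_0$ is a finite-dimensional $\Lambda$-submodule of $V$ isomorphic to $\bar V$, so $\Lambda$-cohomology commutes with $V\mapsto \bar V$. Plugging this into \eqref{eq: equivariant cohomology spectral sequence}, using that $H^{p,q}(M)$ is automatically Hausdorff, and matching with the analogous spectral sequence on the Lie algebra side then yields that $\phi_J$ is an isomorphism. The principal technical hurdle will be the algebraic/geometric $E_2$-comparison of the preceding paragraph: one must verify that the Fourier-analytic description of the Hausdorff quotient of $H^{j,s}(F)$ from Proposition \ref{prop: Hausdorff quotients of Dolbeault cohomology for abelian Lie groups} is compatible, as an $\nulleins{\lh}$-module via the coadjoint action inherited from $\nulleins{\lg}$, with the algebraic splitting provided by $\nulleins{\gothg_0}$, and that the whole identification is $\Lambda$-equivariant.
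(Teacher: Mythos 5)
Your overall architecture (Leray for $p'$, equivariant spectral sequence for $f$, Hausdorff quotients handled via Lemma \ref{lem: Hausdorff quotient for group cohomology}) is the right one, but the key middle step is wrong. You run the Hochschild--Serre spectral sequence for the extension $0\to \nulleins\gothf\to\nulleins\lg\to\nulleins\lh\to 0$ (this is the \emph{middle} row of \eqref{diag: lie algebras}, not the top one), compare it with the Leray sequence of $p'$, and conclude $H^{p,q}(\lg,J)\isom \overline{H^{p,q}(M')}$. That conclusion is false, and the device used to force the match --- declaring the part of $H^s(\nulleins\gothf,\Wedge^p\einsnull{\lg^*})$ involving $\nulleins{\gothk^*}$ to be "the algebraic counterpart of $\bar 0$" --- is not legitimate: these are honest nonzero finite-dimensional Lie algebra cohomology classes (a finite-dimensional space has no non-Hausdorff part), and discarding them changes the abutment. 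A minimal counterexample: let $\lg=\IC^3$ be abelian, $M$ a complex $3$-torus, $\gothf$ a complex $2$-plane with $\Gamma\cap\gothf$ of rank $3$ and $F$ toroidal; then $\overline{H^{0,1}(M')}$ has dimension $2$ (it equals $\Wedge^1\nulleins{\lg_0^*}$), while $H^{0,1}(\lg)$ has dimension $3$. Feeding your identification $V_0=H^{p,s}(\lg,J)$ into the equivariant sequence \eqref{eq: equivariant cohomology spectral sequence} then double-counts the $\gothk$-directions: in this example it would give $\dim H^{0,1}(M)=4$ instead of $3$. The correct intermediate statement is the one the hypothesis on $\lg_0$ is designed for: the inclusion of invariant forms induces $H^q(\nulleins{\lg_0},\Wedge^p\einsnull{\lg^*})\isom\overline{H^{p,q}(M')}$, proved by comparing the Leray sequence of $p'$ with the Hochschild--Serre sequence \eqref{eq: Hochschild Serre for p'} of the genuine top row $0\to\nulleins{\gothf_0}\to\nulleins{\gothg_0}\to\nulleins\lh\to 0$ (fibrewise, the Hausdorff quotient of $H^{s}(F,\Omega^p_{M'}|_F)$ matches $H^s(\nulleins{\gothf_0},\Wedge^p\einsnull{\lg^*})$, not anything built from all of $\nulleins\gothf$). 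The missing $\nulleins\gothk$-directions re-enter only in the second step, through the Hochschild--Serre sequence \eqref{eq: Hochschild Serre for f} of the middle column, whose outer term $H^r(\nulleins\gothk,-)$ is matched with $H^r(\Lambda,-)$ via $\Lambda\tensor\IC\isom\gothk$ (after passing to a finite-index sublattice so that $\Lambda$ is torsion-free). Your proposal never uses $\nulleins{\lg_0}$ as a cohomology group, which is exactly what the hypothesis provides.

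Two smaller points. First, your displayed formula for $\overline{E_2^{r,s}}$ pulls the fibre coefficients out as a tensor factor $H^r(\nulleins\lh,\Wedge^i\einsnull{\lh^*})\tensor\Wedge^j\gothf^*\tensor\Wedge^s\bar\gothf_0^*$; this ignores the generally nontrivial coadjoint action of $\lh$ (respectively the monodromy of $\Gamma_H$) on $\gothf^*$ and $\bar\gothf_0^*$ --- for $\lh_7$ this action is nontrivial --- so the $E_2$-comparison must be stated with module coefficients, as in \eqref{eq: Hochschild Serre for p'}, not as a K\"unneth-type product. Second, the claim that the relevant local-system cohomology on $B'$ is computed by invariant data (where the hypothesis that Conjecture \ref{conj} holds for $B'$ enters) needs to be invoked explicitly at the $E_2$-comparison for $p'$; it is not automatic from the conjecture for trivial coefficients alone.
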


\begin{rem}
 The existence of such a $\gothg_0$ as in the theorem is needed in the proof, but we do not know of an example where it does not exist. An example, where this is quite natural, can be found in Section \ref{sect: h7}.
\end{rem}

\subsection{Proof of  Theorem \ref{thm: toroidal foliation}}
The proof of the theorem consists of comparing two geometric spectral sequences, namely for the one for $p'$ and for $f$, with two appropriately set up Hochschild-Serre-spectral sequences in Lie algebra cohomology. We continue to use the notations from the previous section. 

Note that we can always replace $\Gamma$ by a sublattice of finite index by \cite[Prop. 3.16]{rollenske10}, so we may assume that $\Lambda$ is torsionfree.

\subsubsection{The case $F$ a toroidal theta group}
We first treat the case where the typical leaf of the foliation $F$ is a toroidal theta group, which is more transparent.

\begin{lem}\label{lem: fibrewise cohomolgies theta}
If $F$ is a toroidal theta group then the inclusion on the level of forms induces an isomorphism 
 $H^s(\nulleins{\gothf_0},  \Wedge^p\einsnull{\lg^*})\isom H^{s}(F, \Omega_{M'}^p|_F)$.
\end{lem}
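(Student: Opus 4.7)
The plan is to identify the holomorphic bundle $\Omega^p_{M'}|_F$ as a smoothly trivial but holomorphically twisted bundle on $F$, and then compare its Dolbeault cohomology to Lie algebra cohomology by combining Vogt's theorem with a spectral sequence argument coming from the nilpotent $\nulleins\gothf$-module structure on $\Wedge^p\einsnull{\lg^*}$.

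First I would trivialise. The left-invariant $(1,0)$-forms on $G$ descend to $M'$ and, when restricted to a leaf $F$, give a smooth global frame of $\Omega^1_{M'}|_F$; taking wedges provides a $C^\infty$-isomorphism $\Omega^p_{M'}|_F\cong \underline{\Wedge^p\einsnull{\lg^*}}$. Under this identification the Dolbeault complex becomes
\[
\bigl(\ka^{0,\bullet}(F)\otimes \Wedge^p\einsnull{\lg^*},\; \dq_F\otimes 1+\delta\bigr),
\]
where $\delta$ is a $\ka^\infty(F)$-linear operator. A direct computation of $\dq\omega_i|_F$ for $\omega_i\in\einsnull{\lg^*}$ (invoking only that $\gothf$ is an ideal and using $\dq\omega_i=(d\omega_i)^{1,1}$) shows that $\delta$ is exactly the Chevalley--Eilenberg differential for the $\nulleins\gothf$-module $\Wedge^p\einsnull{\lg^*}$ coming from $(X,Y)\mapsto \operatorname{pr}^{1,0}[X,Y]$.

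Second, I would analyse this module. Since $\gothf$ is an abelian ideal, for $X\in\nulleins\gothf$ and $Y\in\einsnull\lg$ one has $[X,Y]\in\gothf_\IC$ with $(1,0)$-part in $\einsnull\gothf$, vanishing whenever $Y\in\einsnull\gothf$. Dually, the $\nulleins\gothf$-action on $\einsnull{\lg^*}$ lands in the submodule $\einsnull{\lh^*}$ of forms killing $\einsnull\gothf$, and is trivial on both $\einsnull{\lh^*}$ and the quotient $\einsnull{\gothf^*}$; in particular it is two-step nilpotent. This yields a finite decreasing filtration $F^\bullet$ on $\Wedge^p\einsnull{\lg^*}$ with trivial graded pieces $\operatorname{gr}^i=\Wedge^i\einsnull{\lh^*}\otimes\Wedge^{p-i}\einsnull{\gothf^*}$, and in particular $\nulleins{\gothf_0}$ also acts trivially on each $\operatorname{gr}^i$.

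Third, I would run two compatible spectral sequences. The filtration $F^\bullet$ induces a filtration on both the target Dolbeault complex and on the source Chevalley--Eilenberg complex $\bigl(\Wedge^\bullet\nulleins{\gothf_0^*}\otimes\Wedge^p\einsnull{\lg^*},d_{\mathrm{CE}}^{\nulleins{\gothf_0}}\bigr)$; the inclusion of constant-coefficient forms supported in $\nulleins{\gothf_0^*}$-directions intertwines them and yields a morphism of filtered complexes. On the associated graded the twist $\delta$ disappears, so the geometric $E_1$-page reads
\[
\bigoplus_i H^s(F,\ko_F)\otimes \operatorname{gr}^i\;=\;\bigoplus_i \Wedge^s\nulleins{\gothf_0^*}\otimes \operatorname{gr}^i,
\]
using Vogt's theorem (Theorem~\ref{thm: vogt}) for the trivial bundle; this is precisely where the theta hypothesis is used. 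The algebraic $E_1$-page equals $\bigoplus_i\Wedge^s\nulleins{\gothf_0^*}\otimes\operatorname{gr}^i$ because $\nulleins{\gothf_0}$ acts trivially on the graded pieces. The comparison map is the identity at $E_1$, hence an isomorphism at every page and on the abutments.

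The main obstacle is technical bookkeeping at the level of complexes: the inclusion of constant-coefficient $\nulleins{\gothf_0}$-forms into $\ka^{0,\bullet}(F,\Omega^p_{M'}|_F)$ is not literally a chain map, because $\delta$ applied to such a form can produce $\nulleins{\gothk^*}$-components not seen by $d_{\mathrm{CE}}^{\nulleins{\gothf_0}}$. The resolution is to work modulo the filtration $F^\bullet$: the offending $\nulleins{\gothk^*}$-components are strictly lower in the filtration and disappear on the graded, so the map \emph{is} a filtered chain map, even though it is only a chain map strictly on $\operatorname{gr}^0$. Equivalently, one may first compare with the larger left-invariant subcomplex $\Wedge^\bullet\nulleins{\gothf^*}\otimes\Wedge^p\einsnull{\lg^*}$ (which is genuinely a subcomplex under $\delta$) and then use the Fourier decomposition from the proof of Proposition~\ref{prop: Hausdorff quotients of Dolbeault cohomology for abelian Lie groups} together with the theta estimate to show that forms involving $\nulleins{\gothk^*}$-factors are $\dq$-exact via convergent series, exactly as in Vogt's original argument twisted coefficient-wise.
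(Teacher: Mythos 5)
Your strategy coincides with the paper's own proof: filter $\Omega^p_{M'}|_F$ by the sub/quotient structure coming from the fibration (equivalently, by the number of $\einsnull{\lh^*}$-factors), filter $\Wedge^p\einsnull{\lg^*}$ compatibly, and compare the two resulting spectral sequences, identifying the graded terms via Theorem \ref{thm: vogt}; your module-theoretic analysis (the $\nulleins{\gothf}$-action is two-step nilpotent with trivial graded pieces, the graded sheaves are trivial bundles) is correct and is exactly what makes the paper's two-line argument tick.

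The place where your write-up does not yet constitute a proof is precisely the point you flag yourself: the inclusion $\Wedge^\bullet\nulleins{\gothf_0^*}\tensor\Wedge^p\einsnull{\lg^*}\into \ka^{0,\bullet}(F,\Omega^p_{M'}|_F)$ is not a chain map, because $\delta$ produces $\nulleins{\gothk^*}$-components. Your first resolution -- calling it a ``filtered chain map'' because the error terms lie one step deeper in the filtration -- does not suffice: a filtration-preserving map that commutes with the differentials only modulo deeper filtration induces a map on $E_0$ and $E_1$, but it is not a morphism of spectral sequences (compatibility with $d_1$ and the later differentials is not automatic), and, more seriously, it induces no map at all between the abutments; so the concluding sentence ``hence an isomorphism at every page and on the abutments'' does not follow from what you have established. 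To close this you need an honest chain map. One concrete repair: in toroidal coordinates the $\gothk$-components of the lattice are real, so for each $\gothk$-coordinate $w_j$ the function $\bar w_j-w_j$ is well defined on $F$ and $d\bar w_j=\dq(\bar w_j-w_j)$; using these functions one can correct the naive inclusion by finitely many higher-filtration terms (the process terminates because the $\nulleins{\gothf}$-action is nilpotent), obtaining a genuine filtered chain map which agrees with the naive inclusion on the associated graded, after which your $E_1$-comparison via Vogt goes through. Your second suggestion (rerunning the Fourier/Vogt argument with twisted coefficients, as in Proposition \ref{prop: Hausdorff quotients of Dolbeault cohomology for abelian Lie groups}) is also a viable route, but as written it is only a gesture and amounts to reproving a twisted version of Vogt's theorem. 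Note that the paper's proof is silent on this comparison-map issue and simply compares the $E_2$-terms, so you have correctly isolated the one delicate step; but as a self-contained argument the justification of the comparison map is still missing.
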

\begin{proof}
 The exact sequence of locally free sheaves
 \[ 0\to \Omega_F\to \Omega_{M'}|_F\to N_{F/M'}\to 0\]
induces a filtration on the sheaf $\Omega^p_{M'}|_F$, which corresponds to a filtration on the $\nulleins\gothk$-module $\Wedge^p\einsnull{\lg^*}$. Instead of comparing the cohomologies directly, we compare the $E_2$-terms of the resulting spectral sequences. These are isomorphic by Theorem \ref{thm: vogt}, so the inclusion on the level of forms also induces an isomorphism on the limits, which yields the claim.
\end{proof}

\begin{lem}\label{lem: spectral sequences for p' toroidal theta}
If $F$ is a toroidal theta group then the inclusion
 \[ \Wedge^*\nulleins{\lg_0^*}\tensor \Wedge^p\einsnull{\lg^*}\into \ka^{p,*}(M')\]
 induces an isomorphism
 \[ H^q(\nulleins{\lg_0}, \Wedge^p\einsnull{\lg^*}) \isom {H^{p,q}(M')},\]
 which is compatible with the natural $\Lambda$-action on both sides.
 
 In particular, $H^{p,q}(M')$ is finite-dimensional.
\end{lem}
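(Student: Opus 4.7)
The plan is to compare two spectral sequences via the morphism induced by the inclusion of left-invariant forms, and show that they agree on $E_2$-pages. On the geometric side I would use the Leray spectral sequence \eqref{eq: Leray for p'} for the holomorphic submersion $p'\colon M' \to B'$. On the algebraic side I would use the Hochschild-Serre spectral sequence for the Lie algebra extension given by the top row of \eqref{diag: lie algebras},
\[ 0\to \nulleins{\gothf_0}\to \nulleins{\lg_0}\to \nulleins\lh \to 0,\]
with coefficients in the $\nulleins{\lg_0}$-module $\Wedge^p\einsnull{\lg^*}$ (obtained by restriction from $\nulleins\lg$), yielding
\[ E_2^{r,s} = H^r(\nulleins\lh,\, H^s(\nulleins{\gothf_0},\, \Wedge^p\einsnull{\lg^*})) \Longrightarrow H^{r+s}(\nulleins{\lg_0},\, \Wedge^p\einsnull{\lg^*}).\]
By the standard comparison theorem, the claim then reduces to exhibiting a $\Lambda$-equivariant isomorphism of $E_2$-pages.

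The first step is to identify $R^s p'_* \Omega^p_{M'}$. Since $p'$ is holomorphic and locally trivial with typical fibre the toroidal theta group $F$, Lemma \ref{lem: fibrewise cohomolgies theta} (together with Theorem \ref{thm: vogt}) provides a canonical identification of each stalk with $V_s := H^s(\nulleins{\gothf_0}, \Wedge^p\einsnull{\lg^*})$, which is finite-dimensional. The $\nulleins{\lg_0}$-action on $\Wedge^p\einsnull{\lg^*}$ descends to an $\nulleins\lh$-action on $V_s$, and this representation defines a left-invariant holomorphic vector bundle $\mathcal V_s$ on $B' = \Gamma_H\backslash H$. Using the left-invariance of the relative Dolbeault complex $(\ka^{p,\bullet}_{M'/B'}, \dq)$, the stalk identification should globalise to an isomorphism of locally free $\ko_{B'}$-modules $R^s p'_* \Omega^p_{M'} \isom \mathcal V_s$.

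The second step is to invoke the hypothesis that Conjecture \ref{conj} holds on $B'$, in its extension to coefficients in a left-invariant holomorphic vector bundle (standard, along the lines of \cite{con-fin01, rollenske09a}), to obtain $H^r(B', \mathcal V_s) \isom H^r(\nulleins\lh, V_s)$. Combined with step one, this delivers the required isomorphism of $E_2$-pages and hence, by comparison, of the abutments. The $\Lambda$-equivariance at every step is automatic, since $\Lambda$ acts by holomorphic automorphisms of $M'$ preserving both the foliation and the left-invariant data on $G$. Finite-dimensionality of $H^{p,q}(M')$ then follows from the finite-dimensionality of each $E_2^{r,s}$, itself a consequence of the toroidal theta hypothesis.

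The principal technical obstacle I anticipate is the globalisation in step one: promoting the fibrewise identification $(R^s p'_* \Omega^p_{M'})_b \isom V_s$ to an isomorphism of holomorphic sheaves, matching the holomorphic structure on $R^s p'_* \Omega^p_{M'}$ (inherited analytically from relative Dolbeault theory) with the one on $\mathcal V_s$ (determined by the Maurer-Cartan equation for the $\nulleins\lh$-representation). Working in local trivialisations of $p'$ given by translates of $F$, and exploiting that the entire situation descends from left-invariant data on $G$, this should amount to checking that the transition cocycle on $R^s p'_* \Omega^p_{M'}$ is given by the conjugation action of $\Gamma_H$ on $V_s$; the toroidal theta hypothesis, through Theorem \ref{thm: vogt}, ensures that no infinite-dimensional "$\bar 0$" contribution obstructs the comparison.
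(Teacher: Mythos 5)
Your overall architecture is the same as the paper's: you compare the Leray spectral sequence \eqref{eq: Leray for p'} of $p'$ with the Hochschild--Serre spectral sequence \eqref{eq: Hochschild Serre for p'} of the extension $0\to \nulleins{\gothf_0}\to\nulleins{\lg_0}\to\nulleins\lh\to 0$, identify the coefficients via Lemma \ref{lem: fibrewise cohomolgies theta} (i.e.\ Vogt's theorem on the fibre), and note $\Lambda$-equivariance. The genuine gap is in your second step: you invoke, as ``standard,'' the validity of Conjecture \ref{conj} on $B'$ with coefficients in an arbitrary left-invariant holomorphic vector bundle $\mathcal V_s$. That statement is neither standard nor implied by the theorem's hypothesis, which assumes the conjecture for $B'$ only with coefficients $\Omega^p_{B'}$; for general left-invariant bundles the analogous statement is a strengthening of Conjecture \ref{conj} itself and is exactly the kind of assertion one is not allowed to assume here. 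Moreover, the module $V_s=H^s(\nulleins{\gothf_0},\Wedge^p\einsnull{\lg^*})$ carries a genuinely nontrivial $\nulleins\lh$-action in the cases of interest (e.g.\ for $\lh_7$ the adjoint action of $\lg$ on $\gothf$ is nonzero), so you cannot dismiss the coefficient bundle as essentially trivial. To repair this within your framework you would have to exploit the specific structure of $\mathcal V_s$: the filtration of $\Wedge^p\einsnull{\lg^*}$ induced by $\einsnull{\lh^*}\subset\einsnull{\lg^*}$ gives an invariant filtration of $\mathcal V_s$ with graded pieces $\Omega^i_{B'}$ tensored with finite-dimensional modules on which the nilpotent algebra acts unipotently, and one can then induct using injectivity of the comparison map and the five lemma -- but none of this is in your write-up, and without it the key $E_2$-identification is unproved.

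Note also how the paper sidesteps your self-identified ``principal technical obstacle'': it never computes $R^sp'_*\Omega^p_{M'}$ as a holomorphic bundle. The inclusion $\Wedge^*\nulleins{\lg_0^*}\tensor\Wedge^p\einsnull{\lg^*}\into\ka^{p,*}(M')$ is compatible with the two filtrations (the one defined by $p'$ and the one defined by the first row of \eqref{diag: lie algebras}), so the morphism of spectral sequences exists for free, and the $E_2$-comparison is reduced to the fibrewise statement of Lemma \ref{lem: fibrewise cohomolgies theta} together with the identification of cohomology over the base with invariant data -- in the companion wild case this is made explicit by viewing $B'$ as a $K(\Gamma_H,1)$ and interpreting the $E_2$-term as group cohomology of $\Gamma_H$, with Lemma \ref{lem: Hausdorff quotient for group cohomology} controlling the coefficients. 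Reformulating your argument in that style (or carrying out the filtration induction sketched above) would close the gap.
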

\begin{proof}
 The fibration $p'\colon M'\to B'$ defines a filtration on the complex on the right hand side, which via the inclusion is compatible with the filtration defined on the left hand side by the first row of \eqref{diag: lie algebras}.
 This gives a map on the corresponding $E_2$ spectral sequences, which are the Hochschild-Serre-spectral sequence
 \begin{equation}\label{eq: Hochschild Serre for p'}
H^r(\nulleins{\lh}, H^s(\nulleins{\gothf_0},  \Wedge^p\einsnull{\lg^*})) \implies H^{r+s} ( \nulleins{\lg_0},  \Wedge^p\einsnull{\lg^*}),  \end{equation}  

  and the Leray spectral sequence of $p'$ \eqref{eq: Leray for p'}.

By Lemma  \ref{lem: fibrewise cohomolgies theta} the map is an isomorphism on $E_2$ and since  both spectral sequences are compatible with the $\Lambda$-action, our claim follows.
\end{proof}

 Now we consider again two spectral sequences. First the equivariant sheaf cohomology sequence \eqref{eq: equivariant cohomology spectral sequence}
 and second the Hochschild-Serre spectral sequence induced by the middle column of diagram \eqref{diag: lie algebras},
\begin{equation}\label{eq: Hochschild Serre for f} H^r(\nulleins{\gothk}, H^s(\nulleins{\lg_0}, \Wedge^p\einsnull{\lg^*})) \implies H^{r+s}(\nulleins{\lg}, \Wedge^p\einsnull{\lg^*}) = H^{p, r+s}(\lg, J).
 \end{equation}
The theorem now follows from the next Lemma: 
 \begin{lem}\label{lem: spectral sequences for f toroidal theta}
 If $F$ is a toroidal theta group then there is a morphism of spectral sequences from \eqref{eq: Hochschild Serre for f} to  \eqref{eq: equivariant cohomology spectral sequence} which induces an isomorphism on the $E_2$-term, and hence on the $E_\infty$-term.
 \end{lem}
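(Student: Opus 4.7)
The plan is to construct the morphism of spectral sequences from the $\Lambda$-equivariant inclusion $\iota\colon \Wedge^{p,\bullet}\lg^* \hookrightarrow \ka^{p,\bullet}(M')$ of left-invariant forms into smooth forms on $M'$, and then verify that it induces an isomorphism on the $E_2$-page. The $\Lambda = \Gamma/\Gamma'$-action on $M'$ is by left translations, which preserve left-invariant forms, so $\iota$ is equivariant with $\Lambda$ acting trivially on the left-invariant subcomplex. Filtering the left-hand complex by the ideal $\nulleins\lg_0 \subset \nulleins\lg$ and the right-hand complex by the covering $f\colon M'\to M$ produces the spectral sequences \eqref{eq: Hochschild Serre for f} and \eqref{eq: equivariant cohomology spectral sequence} respectively, and $\iota$ respects both filtrations, yielding the desired morphism.

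At the $E_2$-level the morphism reads
\[
H^r(\nulleins\gothk, H^s(\nulleins\lg_0, \Wedge^p\einsnull\lg^*)) \longrightarrow H^r(\Lambda, H^s(M', \Omega^p_{M'})),
\]
and the inner coefficient map is a $\Lambda$-equivariant isomorphism by Lemma \ref{lem: spectral sequences for p' toroidal theta}. Since this inner isomorphism is realised by left-invariant representatives, the $\Lambda$-action on $H^s(M', \Omega^p_{M'})$ is trivial; a parallel computation---the adjoint action of any lift of $\nulleins\gothk$ to $\nulleins\lg$ on the $\nulleins\lg_0$-cohomology of $\Wedge^p\einsnull\lg^*$ factors through $\nulleins\lg_0$ and thus acts as zero---shows the $\nulleins\gothk$-action on the left-hand coefficients is trivial as well. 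Everything reduces to the naturality of an isomorphism $H^r(\nulleins\gothk, \IC) \isom H^r(\Lambda, \IC)$.

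Choosing $\Gamma' := \Gamma \cap p^{-1}(\Gamma_H)$ makes $\Lambda$ abelian, and after replacing $\Gamma$ by a finite-index sublattice (allowed by \cite[Prop. 3.16]{rollenske10}) we may assume $\Lambda$ is torsion-free. A rank count from the toroidal period matrix \eqref{eq: period matrix} then gives $\Lambda\cong\IZ^{\dim_\IC\gothk}$, so both $H^r(\Lambda, \IC)$ and $H^r(\nulleins\gothk, \IC)$ have dimension $\binom{\dim_\IC \gothk}{r}$. The main obstacle is promoting this numerical match to a natural isomorphism induced by the morphism of spectral sequences: one needs to realise $\Lambda$ via the logarithm as a finitely generated subgroup of a real form of $\gothk$, and to argue that the complex structure on $\gothk$ intertwines the Koszul complex computing $H^*(\nulleins\gothk, \IC)$ with an invariant-cochain complex computing $H^*(\Lambda, \IC)$ in a way compatible with the $E_2$-morphism. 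Once this naturality is settled, the isomorphism on $E_\infty$ follows from the standard comparison theorem for convergent spectral sequences.
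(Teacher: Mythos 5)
Your construction of the morphism of spectral sequences from the inclusion of left-invariant forms, and the use of Lemma \ref{lem: spectral sequences for p' toroidal theta} to identify the coefficient modules $\Lambda$-equivariantly, agree with the paper. But the remaining content of the lemma is precisely the identification of the outer cohomologies $H^r(\nulleins{\gothk},-)$ and $H^r(\Lambda,-)$, and there your argument has two genuine gaps. First, your reason for the triviality of the $\nulleins{\gothk}$-action on $H^s(\nulleins{\lg_0},\Wedge^p\einsnull{\lg^*})$ is not valid: the standard Hochschild--Serre fact is that the ideal $\nulleins{\lg_0}$ acts trivially on its own cohomology, so the $\nulleins{\lg}$-action \emph{factors through} the quotient $\nulleins{\gothk}$; it does not follow that the induced $\nulleins{\gothk}$-action is zero (for the Heisenberg algebra the one-dimensional quotient acts nontrivially on $H^1$ of an abelian ideal), so your reduction to constant coefficients is unsupported as stated. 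Second, even granting that reduction, you explicitly defer the essential step --- that the specific $E_2$-morphism induces an isomorphism, which you call ``the main obstacle'' to be ``settled'' --- but this comparison of group cohomology of $\Lambda$ with Lie algebra cohomology of $\nulleins{\gothk}$, carried out compatibly with the map of filtered complexes, \emph{is} the lemma; a dimension count $\binom{\dim_\IC\gothk}{r}$ on both sides does not produce the required isomorphism of $E_2$-terms.

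For comparison, the paper's (admittedly terse) route is: after replacing $\Gamma$ by a finite-index sublattice one may assume $\Lambda$ is torsion-free; then by construction $\Lambda\tensor\IC\isom\gothk$, and the action of $\Lambda$ on $H^s(M',\Omega^p_{M'})$ and of $\gothk$ on the Hochschild--Serre coefficients are both induced by the coadjoint action, so under the coefficient isomorphism of Lemma \ref{lem: spectral sequences for p' toroidal theta} the two actions correspond and the comparison between $H^r(\Lambda,-)$ and $H^r(\nulleins{\gothk},-)$ goes through with these (possibly nontrivial, unipotent) coefficients; no claim of triviality of the actions is made or needed. Your observation that the $\Lambda$-action on $H^s(M',\Omega^p_{M'})$ is trivial because every class has a left-invariant, hence deck-invariant, representative is plausible in the theta case, but it does not by itself control the $\nulleins{\gothk}$-side. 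Finally, the re-definition ``$\Gamma':=\Gamma\cap p^{-1}(\Gamma_H)$'' is not available --- $\Gamma'$ is part of the data in the setting of Theorem \ref{thm: toroidal foliation} --- and the assertion that this choice makes $\Lambda$ abelian is not justified; the paper instead extracts the needed structure of $\Lambda$ (torsion-free with $\Lambda\tensor\IC\isom\gothk$) from the given setup.
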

\begin{proof}
By Lemma \ref{lem: spectral sequences for p' toroidal theta} the inclusion map on forms induces an isomorphim on the coefficients of the spectral sequences. 

As mentioned above, we may assume that $\Lambda$ does not have torsion. Then by construction we have $\Lambda\tensor \IC \isom \gothk$ and the action of $\Lambda$ and of $\gothk$ is induced by the coadjoint action.
 \end{proof}

 \subsubsection{The case where $F$ is not toroidal theta}
In this case the Dolbeault cohomology of the fibre is not finite-dimensional, which we have to deal with by carefully taking Hausdorff quotients.

 \begin{lem}\label{lem: fibrewise cohomolgies wild}
If $F$ is a toroidal wild group then the inclusion on the level of forms induces a natural splitting 
 $ H^{s}(F, \Omega^p_{M'}|_F)\isom H^s(\nulleins{\gothf_0},  \Wedge^p\einsnull{\lg^*})\oplus \overline 0$, 
and thus a natural isomorphism $\overline H^{s}(F, \Omega^p_{M'}|_F)\isom H^s(\nulleins{\gothf_0},  \Wedge^p\einsnull{\lg^*})$.
\end{lem}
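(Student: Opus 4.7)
The plan is to emulate the proof of Lemma \ref{lem: fibrewise cohomolgies theta} from the theta case, but carrying along at every page the extra $\bar 0$-summand that Proposition \ref{prop: Hausdorff quotients of Dolbeault cohomology for abelian Lie groups} attaches to the Dolbeault cohomology of $F$ in the wild case.

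On the geometric side I would begin with the holomorphic short exact sequence $0\to N^*_{F/M'}\to \Omega^1_{M'}|_F\to \Omega^1_F\to 0$ on $F$, noting that $N^*_{F/M'}$ is $F$-equivariantly trivial with fibre $\einsnull{\lh^*}$. The induced decreasing filtration on $\Omega^p_{M'}|_F$ has associated graded $\Wedge^k\einsnull{\lh^*}\otimes \Omega^{p-k}_F$, and the resulting spectral sequence takes the form
\[ E_1^{k,l}\;\cong\;\Wedge^k\einsnull{\lh^*}\otimes H^l(F,\Omega^{p-k}_F)\;\Longrightarrow\; H^{k+l}(F,\Omega^p_{M'}|_F). \]
Feeding Proposition \ref{prop: Hausdorff quotients of Dolbeault cohomology for abelian Lie groups} into each fibrewise factor yields a natural splitting
\[ E_1^{k,l} \;\cong\; \bigl(\Wedge^k\einsnull{\lh^*}\otimes\Wedge^{p-k}\gothf^*\otimes\Wedge^l\bar\gothf_0^*\bigr)\oplus \bar 0, \]
whose first summand is finite-dimensional and Hausdorff.

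In parallel I would filter the $\nulleins{\gothf_0}$-module $\Wedge^p\einsnull{\lg^*}$ by the number of factors in the submodule $\einsnull{\lh^*}\subset\einsnull{\lg^*}$ to obtain the analogous spectral sequence converging to $H^{k+l}(\nulleins{\gothf_0},\Wedge^p\einsnull{\lg^*})$. Since $\gothf$ is an abelian ideal in $\lg$, the coadjoint action of $\nulleins{\gothf_0}\subset \nulleins{\gothf}$ is trivial both on the submodule $\einsnull{\lh^*}$ (because $[\gothf,\lg]\subset\gothf$) and on the quotient $\einsnull{\gothf^*}$ (because $\gothf$ is abelian), so a direct computation of abelian Lie algebra cohomology identifies the Lie algebra $E_1$-term with exactly the finite-dimensional summand above. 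The inclusion of left-invariant forms respects both filtrations and therefore induces a morphism of spectral sequences realising the Lie algebra $E_1$-term as the Hausdorff summand of the geometric one.

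The main obstacle will be propagating this identification through the remaining pages in the QF-setting. The $\bar 0$-summand on each page is carried into itself by $d_r$, because continuous maps preserve $\overline{\{0\}}$, and on the other hand the image of the Lie algebra spectral sequence provides a complementary, finite-dimensional, naturally split subcomplex on every page. I would argue inductively on $r$ that $E_r \cong E_r^{\mathrm{inv}}\oplus \bar 0$ and that $d_r$ restricted to the invariant part coincides with the Lie algebra differential; since the filtration has at most $p+1$ steps, degeneration occurs after finitely many pages, and the limit then gives the desired natural splitting
\[ H^s(F,\Omega^p_{M'}|_F)\;\isom\; H^s(\nulleins{\gothf_0},\Wedge^p\einsnull{\lg^*})\oplus \bar 0, \]
and hence the claimed isomorphism on Hausdorff quotients.
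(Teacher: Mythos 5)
Your setup is exactly the paper's skeleton: the conormal filtration on $\Omega^p_{M'}|_F$, the corresponding filtration of the $\nulleins{\gothf_0}$-module $\Wedge^p\einsnull{\lg^*}$, and the identification of the Lie-algebra $E_1$-term with the finite-dimensional summand of the geometric $E_1$-term (your computation that $\nulleins{\gothf_0}$ acts trivially on $\einsnull{\lh^*}$ and on $\einsnull{\gothf^*}$ is correct); this is the argument of Lemma \ref{lem: fibrewise cohomolgies theta} with Theorem \ref{thm: vogt} replaced by Proposition \ref{prop: Hausdorff quotients of Dolbeault cohomology for abelian Lie groups}. The gap is in how you propagate the splitting beyond $E_1$. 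Your induction needs, on every page, that $E_r$ is the direct sum of a finite-dimensional invariant part and a ``$\bar 0$-part'' with $d_r$ diagonal. Continuity of $d_r$ only gives $d_r(\overline{\{0\}})\subset\overline{\{0\}}$ for the closure of zero in the natural subquotient topology of $E_r$; but for $r\geq 2$ your $\bar 0$-summand, defined as the cohomology of the previous one, is merely \emph{contained} in $\overline{\{0\}}^{E_r}$, and nothing in your argument excludes $E_r^{\mathrm{inv}}\cap\overline{\{0\}}^{E_r}\neq 0$. If that intersection is nonzero, $d_r$ may carry the non-Hausdorff summand into the invariant one, the direct-sum computation of $E_{r+1}$ collapses, and the same phenomenon in the abutment would mean a nonzero invariant class lying in the closure of zero of $H^s(F,\Omega^p_{M'}|_F)$ --- precisely what the lemma must rule out. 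What you need at each step is that passing to cohomology commutes with Hausdorff quotients, which fails for general QF-complexes and is the actual crux of the wild case; the $d_r$-stability of the image of the algebraic spectral sequence does not substitute for it, since using it as a complement presupposes the very transversality (and the injectivity of the comparison map on each page) that is not established.

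The paper closes this hole with an ingredient your proposal never uses: since $F$ is an Eilenberg--MacLane space $K(\Gamma_\gothf,1)$ and all the sheaves occurring in the filtration are flat, every cohomology group in the argument can be reinterpreted as a group cohomology of $\Gamma_\gothf$, and Lemma \ref{lem: Hausdorff quotient for group cohomology} applies at every step. The point of that lemma is that the splitting is arranged at the level of the \emph{coefficient module} ($V=V_0\oplus\bar 0$ with $V_0$ finite-dimensional, a topological splitting), so the whole cochain complex decomposes as a topological direct sum and its cohomology inherits the splitting with the correct topological properties: a finite-dimensional Hausdorff part plus a part with trivial Hausdorff quotient. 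To repair your argument you would either have to prove the missing transversality on each page directly --- in effect reproving Lemma \ref{lem: Hausdorff quotient for group cohomology} in a harder setting --- or, as the paper does, transport the whole computation to group cohomology of $\Gamma_\gothf$, where the splitting of the coefficients is available \emph{before} taking cohomology.
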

\begin{proof}
We want to  argue exacly as in the proof of Lemma \ref{lem: fibrewise cohomolgies theta}, however we need to apply Proposition \ref{prop: Hausdorff quotients of Dolbeault cohomology for abelian Lie groups} instead of Theorem \ref{thm: vogt} if $F$ is toroidal wild. 

But in every step Lemma \ref{lem: Hausdorff quotient for group cohomology} applies, because $F$ is an Eilenberg-MacLane space and all bundles involved are flat, so the cohomologies we compute can all be reinterpreted as group cohomologies for $\pi_1(F) = \Gamma_\gothf$. 
\end{proof}

\begin{lem}\label{lem: spectral sequences for p' toroidal wild}
If $F$ is a toroidal group then the inclusion
 \[ \Wedge^*\nulleins{\lg_0^*}\tensor \Wedge^p\einsnull{\lg^*}\into \ka^{p,*}(M')\]
 induces an isomorphism
 \[ H^q(\nulleins{\lg_0}, \Wedge^p\einsnull{\lg^*}) \isom \overline{H^{p,q}(M')},\]
 which is compatible with the natural $\Lambda$-action on both sides.
 
 In particular, $\overline{H^{p,q}(M')}$ is finite-dimensional.
\end{lem}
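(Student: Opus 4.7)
The plan is to imitate the proof of Lemma \ref{lem: spectral sequences for p' toroidal theta}, replacing Lemma \ref{lem: fibrewise cohomolgies theta} with Lemma \ref{lem: fibrewise cohomolgies wild} and taking Hausdorff quotients at every step. The holomorphic submersion $p'\colon M'\to B'$ induces a decreasing filtration on $\ka^{p,\bullet}(M')$ whose associated spectral sequence is \eqref{eq: Leray for p'}; symmetrically, the top row of \eqref{diag: lie algebras} yields the Hochschild-Serre spectral sequence
\[ H^r(\nulleins\lh, H^s(\nulleins{\gothf_0}, \Wedge^p\einsnull{\lg^*})) \implies H^{r+s}(\nulleins{\lg_0}, \Wedge^p\einsnull{\lg^*}). \]
The inclusion of left-invariant forms provides a morphism between these two sequences that is compatible with the residual $\Lambda$-action.

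Write $V = H^s(F, \Omega^p_{M'}|_F)$ and $V_0 = H^s(\nulleins{\gothf_0}, \Wedge^p\einsnull{\lg^*})$. By Lemma \ref{lem: fibrewise cohomolgies wild} the fibrewise comparison gives an $\nulleins\lh$-equivariant decomposition $V = V_0 \oplus \overline 0$ in which $V_0$ is finite-dimensional and maps isomorphically onto the Hausdorff quotient $\overline V$. Since $B' = \Gamma_H\backslash H$ is a compact nilmanifold, hence a $K(\Gamma_H, 1)$, and the principal $F$-bundle $p'$ arises from a group extension, the higher direct image $R^sp'_*\Omega^p_{M'}$ is the flat sheaf associated to the $\Gamma_H$-module $V$, and therefore $H^r(B', R^sp'_*\Omega^p_{M'}) \cong H^r(\Gamma_H, V)$. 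Lemma \ref{lem: Hausdorff quotient for group cohomology} then guarantees that the passage to Hausdorff quotient commutes with $H^r(\Gamma_H, -)$. The hypothesis that Conjecture \ref{conj} holds for $B'$, applied at the finite-dimensional coefficient module $V_0$, identifies $H^r(\Gamma_H, V_0)$ with the Lie algebra piece $H^r(\nulleins\lh, V_0)$, so the induced morphism on the $E_2$-pages becomes an isomorphism after passing to Hausdorff quotients on the geometric side.

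A termwise comparison of the two spectral sequences—legitimate because the geometric differentials are continuous, so Hausdorff quotients of successive pages are well-defined—transports this $E_2$-isomorphism to $E_\infty$ and thence to the abutments, producing the desired isomorphism $H^q(\nulleins{\lg_0}, \Wedge^p\einsnull{\lg^*}) \isom \overline{H^{p,q}(M')}$. The $\Lambda$-equivariance is inherited from the naturality of the constructions, exactly as in the theta case, and the finite-dimensionality of $\overline{H^{p,q}(M')}$ is immediate from that of the Lie algebra cohomology. The principal technical obstacles I anticipate are verifying the continuity of the $\Gamma_H$-action on the QF-space $V$ (needed to apply Lemma \ref{lem: Hausdorff quotient for group cohomology}), the clean identification of $R^sp'_*\Omega^p_{M'}$ with a flat sheaf of QF-spaces on $B'$, and the bookkeeping required to ensure that the spectral sequence comparison interacts with the formation of Hausdorff quotients page by page.
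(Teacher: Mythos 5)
Your overall architecture coincides with the paper's: you compare the Leray spectral sequence \eqref{eq: Leray for p'} with the Hochschild--Serre sequence \eqref{eq: Hochschild Serre for p'} via the inclusion of invariant forms, reinterpret the $E_2$-term over $B'=K(\Gamma_H,1)$ in terms of group cohomology of $\Gamma_H$, feed Lemma \ref{lem: fibrewise cohomolgies wild} into Lemma \ref{lem: Hausdorff quotient for group cohomology}, and obtain $\Lambda$-equivariance from naturality. Up to that point your proposal matches the paper's proof.

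The step that is genuinely problematic is your justification of the $E_2$-isomorphism: you assert that Conjecture \ref{conj} for $B'$, ``applied at the finite-dimensional coefficient module $V_0$'', identifies $H^r(\Gamma_H,V_0)$ with $H^r(\nulleins\lh,V_0)$. The conjecture says nothing of the sort: it compares invariant Dolbeault cohomology with the Dolbeault (i.e.\ coherent sheaf) cohomology of $B'$, not with group cohomology of the lattice with values in an arbitrary finite-dimensional module, and as a bare statement your identification is false -- already for the trivial module Nomizu's theorem gives $H^1(\Gamma_H,\IC)\isom H^1(\lh,\IC)$ of dimension $b_1(B')$, while $H^1(\nulleins\lh,\IC)$ has only half that dimension (for $B'$ an elliptic curve: $2$ versus $1$). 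The coefficient object in the Leray $E_2$-term is not the bare $\Gamma_H$-module $V_0$ but carries the holomorphic structure of $R^sp'_*\Omega^p_{M'}$, and the comparison in the base direction has to be the one already set up in the theta case, namely the morphism of spectral sequences coming from the compatible filtrations in the proof of Lemma \ref{lem: spectral sequences for p' toroidal theta} (first row of \eqref{diag: lie algebras}); that is where the hypothesis on $B'$ is actually consumed. The paper's wild-case proof leaves that part unchanged and only modifies the fibre input: by Lemma \ref{lem: fibrewise cohomolgies wild} the coefficient comparison satisfies the hypotheses of Lemma \ref{lem: Hausdorff quotient for group cohomology}, so the already constructed map of $E_2$-terms becomes an isomorphism after taking the Hausdorff quotient of the coefficients, and one concludes by that lemma; in particular no page-by-page Hausdorff-quotient bookkeeping of the spectral sequence (your final concern) is attempted or needed.
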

\begin{proof}
 Dolbeault cohomology of $M'$ is computed via the spectral sequence \eqref{eq: Leray for p'}. Since the fibration is locally trivial the coefficient sheaf is a local system with fibre $H^s(F, \Omega^p_{M'}|_F)$. 
 Since $B'$ is a nilmanifold, it is an Eilenberg--MacLane space $K(\Gamma_H, 1)$ and thus the cohomology of a local system on $B'$ is, by definition, the group cohomology of $\Gamma_H$ with values in the module $H^s(F, \Omega^p_{M'}|_F)$.

By Lemma \ref{lem: fibrewise cohomolgies wild}   the inclusion of left-invariant forms into $H^s(F, \Omega^p_{M'}|_F)$ satisfies the assumptions of Lemma \ref{lem: Hausdorff quotient for group cohomology} and thus the map on the $E_2$ terms \eqref{eq: Hochschild Serre for p'} to the $E_2$-term of \eqref{eq: Leray for p'} is an isomorphism after taking the Hausdorff quotient of the coefficients and we conclude by Lemma \ref{lem: Hausdorff quotient for group cohomology}.
\end{proof}

The rest of the proof proceeds as in the toroidal theta case, again making use of Lemma \ref{lem: Hausdorff quotient for group cohomology}.

\section{Dolbeault cohomology of comlex nilmanifolds of real dimension six}
To show that the result from Section \ref{sect: main} can be applied in practice we prove the following: 
\begin{thm}\label{thm: dim 6}
 Let $M$ be a nilmanifold of real dimension $6$ and let $J$ be a left-invariant complex structure on $M$. Then Conjecture \ref{conj} holds for $(M,J)$.
\end{thm}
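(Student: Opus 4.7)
By Salamon's classification \cite{salamon01}, the 6-dimensional nilpotent Lie algebras admitting a complex structure form a finite list, and by \cite{rollenske09b} Conjecture~\ref{conj} has been established for all of them except $\lh_7$. The plan is therefore to treat the case $\lg\cong\lh_7$ for an arbitrary left-invariant complex structure $J$ and an arbitrary lattice $\Gamma$; everything else just cites the two references above.

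In $\lh_7$ the commutator $Z:=[\lg,\lg]$ coincides with the 3-dimensional center, and so cannot itself be $J$-invariant. Since $Z+JZ$ is $J$-stable and hence of even real dimension, $\dim(Z+JZ)\in\{4,6\}$. I would first exclude the case $\lg=Z\oplus JZ$: picking a basis $e_1,e_2,e_3$ of $JZ$ and setting $f_j:=Je_j\in Z$, the centrality of $Z$ forces $[f_i,e_j]=[e_i,f_j]=[f_i,f_j]=0$, so the vanishing of the Nijenhuis tensor reduces to $[e_i,e_j]=0$; this would make $\lg$ abelian, a contradiction. Hence $K:=Z\cap JZ$ is 2-dimensional and $J$-invariant, and for any $z_0\in Z\setminus K$ the subspace
\[ \gothf\;:=\;Z+\langle Jz_0\rangle \]
is a 4-real-dimensional abelian $J$-invariant ideal of $\lg$: $J$-invariance follows from $JK=K$ together with $J(Jz_0)=-z_0\in Z$, abelianness from $Z$ being central, and ideal-ness from $[\lg,\lg]=Z\subset\gothf$. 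This $\gothf$ is the natural candidate to feed into Theorem~\ref{thm: toroidal foliation}.

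With this choice, $\lh:=\lg/\gothf$ is 2-real-dimensional and abelian, so any admissible base $B'$ is an elliptic curve and Conjecture~\ref{conj} holds on it trivially. When $\gothf$ is $\Gamma$-rational the projection $p$ descends to a holomorphic principal elliptic bundle on $M$ and the statement follows from \cite{con-fin01,cfgu00}; the essential case is therefore the non-rational one, where the typical leaf $F=\Gamma_\gothf\backslash\gothf$ is a non-compact abelian complex Lie group of complex dimension~$2$ and Theorem~\ref{thm: toroidal foliation} is the relevant tool. The main obstacle, which I expect to occupy Section~\ref{sect: h7}, is then twofold: (i)~choosing $\Gamma'\subset\Gamma$, after possibly passing to a finite-index sublattice via \cite[Prop.~3.16]{rollenske10}, so that $\Gamma_H=p(\Gamma')$ is cocompact in $H$ and $F$ is \emph{toroidal} in the sense of Section~\ref{sect: toroidal groups}; and (ii)~producing the complex subalgebra $\lg_0^{0,1}\subset\lg^{0,1}$ of diagram~(\ref{diag: lie algebras}) by selecting a $J$-invariant complement $\bar\lh$ of $\gothf$ in $\lg$ satisfying $[\bar\lh,\bar\lh]\subset\gothf_0$ and $[\gothf_0,\bar\lh]\subset\gothf_0$. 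Both should be arrangeable from the freedom in the choice of $\bar\lh$ and from the $\Gamma$-rationality of $Z$, after which Theorem~\ref{thm: toroidal foliation} yields Conjecture~\ref{conj} for $(M,J)$.
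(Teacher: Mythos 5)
Your reduction to $\lh_7$ and your intrinsic construction of the ideal $\gothf=Z+\langle Jz_0\rangle=Z+JZ$ (with the Nijenhuis computation showing $Z\cap JZ$ must be $2$-dimensional) are correct, and they give a coordinate-free substitute for the paper's normalization of $J$ to $J_0$ via Proposition~\ref{prop: structure h7}. But from that point on the argument stops precisely where the work begins: the entire content of the remaining case is the verification of the hypotheses of Theorem~\ref{thm: toroidal foliation}, and you defer both of them (``should be arrangeable''). Worse, the mechanism you propose for the second hypothesis provably fails: there is \emph{no} complement $\bar\lh$ of $\gothf$ in $\lh_7$ with $[\bar\lh,\bar\lh]\subset\gothf_0$. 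Indeed, in the non-rational case $\gothf_0=\gotha=Z\cap JZ$ (in the normal form $\gotha=\langle e_5,e_6\rangle$), any complement of $\gothf$ is spanned by $x\equiv ae_1+be_2$, $y\equiv ce_1+de_2$ modulo $\gothf$ with $ad-bc\neq0$, and then $[x,y]\equiv(ad-bc)\,e_4$ modulo $\gotha$, while $e_4\notin\gotha\oplus\bar\lh$; so $\gothf_0\oplus\bar\lh$ is never a subalgebra. This is exactly the phenomenon the paper flags in the footnote to its proof: the subalgebra making diagram~\eqref{diag: lie algebras} work is $\nulleins{\lg_0}=\langle\bar X_1,\bar X_3\rangle$ with $X_1=e_1+\I e_2$, $X_3=e_5+\I e_6$, which is abelian only because $\bar X_3$ lies in the centre, and $\einsnull{\lg_0}\oplus\overline{\einsnull{\lg_0}}$ is \emph{not} a subalgebra of $(\lh_7)_{\IC}$. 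So the needed $\nulleins{\lg_0}$ exists only at the $(0,1)$-level and cannot be obtained from a real $J$-invariant complement as you intend; your plan (ii) must be replaced by this genuinely complex choice.

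The first deferred hypothesis also needs a proof, not just the remark that $Z$ is $\Gamma$-rational: one must show that when $\gothf$ is not $\Gamma$-rational the leaf $F=(\Gamma\cap\gothf)\backslash\gothf$ is toroidal, i.e.\ splits off no $\IC$- or $\IC^*$-factor. The paper does this in Proposition~\ref{prop: H7 fibred or foliated by toroidal groups}: the span of $\Gamma\cap\gothf$ is exactly the commutator, the only $J$-invariant plane inside it is $\gotha$, and $\gotha$ being $\Gamma$-rational would force $\gothf$ to be $\Gamma$-rational; none of this appears in your sketch. (A minor further slip: in the rational case the map \eqref{eq: fibration H7} induces a non-principal $2$-torus bundle over an elliptic curve, not a principal elliptic bundle, though the appeal to \cite{con-fin01} is still the right conclusion.) In short, your skeleton coincides with the paper's strategy, but the two hypotheses of Theorem~\ref{thm: toroidal foliation} --- the actual content of Section~\ref{sect: h7} --- are left unverified, and the specific construction you propose for the subalgebra $\nulleins{\lg_0}$ cannot work.
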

Before we can go into the proof we need to study one particular six-dimenional real Lie algebra in detail. 
\subsection{The structure of $\lh_7$ and related Lie algebras}\label{sect: h7}
In this section we study the real $6$-dimensional Lie algebra $\lh_7$.
\begin{lem}
 Let $V$ be a real vector-space and $\lg = V\oplus \Wedge^2 V$ the free 2-step nilpotent Lie algebra on $V$. 
 \begin{enumerate}
  \item All Lie algebra automorphisms of $\lg$ are of the form 
\[ \begin{pmatrix}
    \phi & 0\\\psi & \Wedge^2\phi
   \end{pmatrix}
   \]
with $\phi\in \Gl(V)$ and $\psi \in \Hom (V, \Wedge^2 V)$. 
\item Up to Lie algebra automorphism there exists a unique rational structure for $\lg$.
 \end{enumerate}
\end{lem}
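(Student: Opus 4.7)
My plan for part (1) is to argue that any automorphism $F$ of $\lg$ preserves the derived subalgebra $[\lg,\lg]=\Wedge^2 V$, since characteristic ideals are preserved by automorphisms. Hence $F$ has block-lower-triangular form with respect to the decomposition $\lg = V \oplus \Wedge^2 V$, i.e.\ $F = \begin{pmatrix} \phi & 0 \\ \psi & \beta \end{pmatrix}$ with $\phi\colon V\to V$, $\psi\colon V\to \Wedge^2 V$, and $\beta\colon \Wedge^2 V\to \Wedge^2 V$. Evaluating $F$ on brackets: for $v,w\in V$, $F[v,w] = F(v\wedge w) = \beta(v\wedge w)$, while $[Fv,Fw] = [\phi(v)+\psi(v),\phi(w)+\psi(w)] = \phi(v)\wedge \phi(w)$ because $\Wedge^2 V$ is central. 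Comparing the two yields $\beta = \Wedge^2\phi$. Finally, the induced map on $\lg/[\lg,\lg] = V$ is $\phi$, so $\phi$ lies in $\Gl(V)$; conversely, any matrix of this shape with $\phi\in\Gl(V)$ gives a Lie algebra endomorphism (direct check) which is invertible with inverse of the same shape.

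For part (2), I first observe how a rational structure canonically decomposes. Given a rational structure $\lg_\IQ\subset\lg$, the derived subalgebra $[\lg_\IQ,\lg_\IQ]$ is a rational structure on $\Wedge^2 V$. Choose any $\IQ$-linear complement $V_\IQ$ of $[\lg_\IQ,\lg_\IQ]$ in $\lg_\IQ$, together with a $\IQ$-basis $e_1,\dots,e_n$ of $V_\IQ$. Tensoring the decomposition $\lg_\IQ = V_\IQ\oplus[\lg_\IQ,\lg_\IQ]$ with $\IR$, I see that $V_\IQ\otimes\IR$ is an $\IR$-complement of $\Wedge^2 V$ in $\lg$, so the $e_i$ project to an $\IR$-basis $v_i\in V$. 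Because the bracket extends the wedge, the elements $[e_i,e_j]$ for $i<j$ are $\IR$-linearly independent (they become the $v_i\wedge v_j$ in $\Wedge^2 V$ modulo lower-order stuff, which is an $\IR$-basis of $\Wedge^2 V$) and lie in $[\lg_\IQ,\lg_\IQ]$, so by a dimension count they form a $\IQ$-basis of $[\lg_\IQ,\lg_\IQ]$. Writing each $e_i = v_i + w_i$ with $w_i\in\Wedge^2 V$ then exhibits $\lg_\IQ$ entirely in terms of the data $(v_1,\dots,v_n;\,w_1,\dots,w_n)$.

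To compare two such rational structures $\lg_\IQ^{(k)}$ ($k=1,2$) with corresponding data $(v_i^{(k)};\,w_i^{(k)})$, I will construct an automorphism as in part (1). Let $\phi\in\Gl(V)$ be the unique $\IR$-linear map sending $v_i^{(1)}\mapsto v_i^{(2)}$, and define $\psi\colon V\to\Wedge^2 V$ on the basis by $\psi(v_i^{(1)}) = w_i^{(2)} - \Wedge^2\phi(w_i^{(1)})$, extended $\IR$-linearly. The automorphism $F = \begin{pmatrix}\phi & 0\\ \psi & \Wedge^2\phi\end{pmatrix}$ then satisfies $F(e_i^{(1)}) = e_i^{(2)}$ by construction, and since $F$ is a Lie algebra homomorphism it automatically sends $[e_i^{(1)},e_j^{(1)}]$ to $[e_i^{(2)},e_j^{(2)}]$; hence $F$ maps the $\IQ$-basis of $\lg_\IQ^{(1)}$ to a $\IQ$-basis of $\lg_\IQ^{(2)}$, so $F(\lg_\IQ^{(1)}) = \lg_\IQ^{(2)}$.

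The only mildly delicate point I expect is the $\IR$-linear independence of the brackets $[e_i,e_j]$, which rests on the fact that the wedge products $v_i\wedge v_j$ form an $\IR$-basis of $\Wedge^2 V$ whenever the $v_i$ do; this is precisely where the \emph{freeness} of $\lg$ enters, and it is exactly what lets us recover the rational structure from a single $\IQ$-basis of $V_\IQ$.
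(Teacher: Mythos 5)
Your proposal is correct and follows essentially the same route as the paper: describe all automorphisms via part (1), pick a basis of the rational structure whose first $n$ elements span a complement of the commutator, and use an automorphism of the form $\begin{pmatrix}\phi & 0\\ \psi & \Wedge^2\phi\end{pmatrix}$ to move that complement where you want it, the brackets of those basis elements then automatically giving a $\IQ$-basis of the commutator. The only cosmetic difference is that the paper normalises a given rational structure to the standard one spanned by a fixed basis $\kb$ of $V$ and $\Wedge^2\kb$, while you match two rational structures to each other directly; your remark that $[e_i,e_j]$ equals $v_i\wedge v_j$ \emph{exactly} (not just modulo lower-order terms, since the algebra is 2-step) only strengthens the argument.
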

\begin{proof}
 The first item is clear. For the second item, suppose $\lg_\IQ$ is a $\IQ$-structure. Choose as basis $w_1, \dots, w_m$ for $\lg_\IQ$, such that $w_1, \dots, w_d$ span a complement of the commutator. Then there is an automorphism of $\lg$ sending $w_1, \dots, w_d$ to a fixed basis $\kb$ of $V$, which transform $\lg_\IQ$ in the $\IQ$-structure spanned by $\kb$ and $\Wedge^2\kb$.
\end{proof}

\begin{prop}\label{prop: structure h7} Consider the Lie algebra $\lh_7=(0,0,0,12,13,23)\isom\IR^3\oplus \Wedge^2\IR^3$. Then 
\begin{enumerate}
 \item $\lh_7$ has a unique rational structure up to Lie algebra automorphisms.
 \item $\lh_7$ has a unique complex structure $J_0$ up to Lie algebra automorphisms with a representative given by $J_0e_{2i-1} = e_{2i}, i =1,2,3.$
 \item  The only non-trivial, $J_0$-invariant ideals in $\lh_7$ are $\gothf= \langle e_3, \dots, e_6\rangle$ and $\gotha= \langle e_5, e_6\rangle$.
 \end{enumerate}
\end{prop}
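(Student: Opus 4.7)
The plan is to prove the three items separately, using the description $\lh_7 = V \oplus \Wedge^2 V$ with $V = \IR^3$ and the characterization $\operatorname{Aut}(\lh_7) = \Gl(V) \ltimes \Hom(V, \Wedge^2 V)$ from the preceding Lemma.

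Item~(i) is immediate from part~2 of that Lemma. For item~(ii), one first checks directly that the Nijenhuis tensor of $J_0$ vanishes on pairs of basis vectors; this is a short computation because $W := [\lh_7, \lh_7] = \langle e_4, e_5, e_6\rangle$ is central and $[e_i, e_j] = e_i \wedge e_j$ for $i, j \in \{1, 2, 3\}$. For uniqueness, let $J$ be an arbitrary integrable complex structure and set $\gotha := W \cap JW$, $\gothf := W + JW$, both $J$-invariant. Since $\gotha$ is $J$-invariant its real dimension is even, so $\dim \gotha \in \{0, 2\}$. The case $\dim \gotha = 0$ forces $\lh_7 = W \oplus JW$, and then for linearly independent $v_1, v_2 \in JW$ the Nijenhuis tensor $N_J(v_1, v_2)$ reduces to $[v_1, v_2]$ (the other three terms vanish because $Jv_1, Jv_2 \in J(JW) = W$ are central); but $JW$ is a complement to $W$ in $\lh_7 \cong V \oplus \Wedge^2 V$, so $[v_1, v_2]$ is nonzero, contradicting integrability. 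Hence $\dim \gotha = 2$ and $\dim \gothf = 4$.

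Writing $\gotha = \ell \wedge V$ for the unique line $\ell \subset V$ determined by a 2-dimensional subspace of $\Wedge^2 V$, a brief complexified computation (using the integrability relation $\lh_7^{1,0} \cap W_\IC = \Wedge^2 U^{1,0}$ for $U^{1,0} := \pi(\lh_7^{1,0}) \subset V_\IC$) shows $\gothf/W = \ell$. Applying $\Gl(V)$ we normalize $\ell = \IR e_3$, so that $\gotha = \langle e_5, e_6\rangle$ and $\gothf = \langle e_3, \ldots, e_6\rangle$. It then remains to match $J$ with $J_0$ using the stabilizer of $\ell$ in $\Gl(V)$ together with the $\Hom(V, \Wedge^2 V)$-freedom; this is carried out by successively normalizing the induced complex structures on the three real 2-dimensional quotients $\gotha$, $\gothf/\gotha$, and $\lh_7/\gothf$ to their $J_0$-counterparts, and then killing the remaining off-diagonal parameters using $\Hom(V, W)$. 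This final bookkeeping step is the expected main technical obstacle.

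For item~(iii), a $J_0$-invariant subspace has even real dimension, so a non-trivial $J_0$-invariant ideal is 2- or 4-dimensional. For a 2-dimensional ideal $\gothi$: any $u \in \gothi \setminus W$ would satisfy $[\lh_7, u] \subseteq \gothi \cap W$ with $[\lh_7, u]$ already 2-dimensional in $W$, forcing $\gothi \subset W$; but within $W$ the only 2-dimensional $J_0$-invariant subspace is $\gotha$, since $J_0 e_4 = -e_3 \notin W$ rules out anything else. For a 4-dimensional ideal $\gothg$, a case analysis on $\dim(\gothg \cap W) \in \{1, 2, 3\}$ excludes the first two by the same bracket-dimension count, leaving $W \subset \gothg$; then $J_0$-invariance of $\gothg/W$ as a line in $V$ forces $\gothg/W = \IR e_3$, i.e.\ $\gothg = \gothf$, since $J_0$ restricted to $\langle e_1, e_2\rangle$ has no real eigenvectors.
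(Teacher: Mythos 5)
Items (i) and (iii) of your proposal are correct, and for (iii) your argument is in substance the paper's: a non-trivial even-dimensional ideal either lies in the commutator $W=[\lh_7,\lh_7]$ or is $4$-dimensional and contains $W$, and $J_0$-invariance then forces $\gotha$ resp.\ $\gothf$ (the paper organises this via the splitting $\gothb=\gothb_0\oplus\gothb_1$ with $\gothb_1=\gothb\cap W$, you via a case analysis on $\dim(\gothb\cap W)$; same substance). One phrase needs repair: ``$J_0$-invariance of $\gothg/W$ as a line in $V$'' is not literally meaningful, because $J_0$ does \emph{not} preserve $W$ (indeed $J_0e_4=-e_3\notin W$), so no complex structure is induced on $\lh_7/W$. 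The correct, and shorter, conclusion is: since $e_4\in W\subset\gothg$ and $\gothg$ is $J_0$-invariant, $e_3=-J_0e_4\in\gothg$, hence $\gothg\supseteq\gothf$ and equality holds by dimension.

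The genuine gap is in item (ii). Your reduction is sound: integrability rules out $W\cap JW=0$ (for $v_1,v_2\in JW$ the Nijenhuis tensor degenerates to $[v_1,v_2]$, which cannot vanish on a complement of $W$ in the free $2$-step algebra), so $W\cap JW$ is a $2$-plane of the form $\ell\wedge V$, $W+JW$ is the corresponding $4$-dimensional space, and after an element of $\Gl(V)$ one may assume these are $\gotha$ and $\gothf$. But the actual content of the uniqueness statement --- that the remaining, constrained group (the stabilizer of $\ell$ in $\Gl(V)$, which acts on $W$ only through $\Wedge^2\phi$, together with $\Hom(V,\Wedge^2V)$) acts transitively on the integrable $J$ with these invariants --- is exactly the step you defer as ``bookkeeping'' and yourself flag as the main technical obstacle. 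Normalising the induced structures on $\gotha$, $\gothf/\gotha$ and $\lh_7/\gothf$ separately does not suffice: these three actions are not independent (all are induced by the single pair consisting of $\phi|_\ell$ and the map induced on $V/\ell$; e.g.\ on $\gotha\isom\ell\otimes(V/\ell)$ the action is the product of the two), and one must still show, using integrability again, that the off-diagonal components of $J$ can be removed and that no modulus survives. As written, the uniqueness in (ii) is a plan rather than a proof. The paper itself sidesteps this by quoting the normal forms of \cite{ugarte07} (or declaring it a direct computation); to close your argument you should either cite that classification at this point or actually carry out the normalisation, which in effect re-derives Ugarte's normal form for $\lh_7$.
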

\begin{proof}
The first item is a special case of the previous Lemma. The second item is a simple computation or follows also quickly from the normal forms given in \cite{ugarte07}.

For the last item assume $\gothb$ is an even-dimensional, non-trivial ideal of $\lh_7$ and write it as a direct sum of vector spaces $\gothb = \gothb_0\oplus \gothb_1$, where $\gothb_1 = \gothb\cap [\lh_7, \lh_7]$. Then $[\lh_7, \gothb_0]\subset \gothb_1$, so using the structure equations of $\lh_7$ we get that either $\gothb= \gothb_1$ is contained in the commutator or $\dim_\IR\gothb = 4$ and $\gothb$ contains the commutator.
 If $\gothb$ is further assumed to be $J_0$-invariant, then by the above we have $\gothb = \gotha$ in the former case and $\gothb = \gothf$ in the latter, because  $\gotha$ is the only $J_0$-invariant subspace contained in the commuator of $\lh_7$, and  $\gothf$ is the smallest such subspace containing the commutator. 
 \end{proof}

 Consider $H_7$ together with the left-invariant complex structure $J_0$ introduced above. Then $\gothf = \langle e_3, e_4, e_5, e_6\rangle$ is an abelian, $J$-invariant ideal of $\lh_7$ with abelian quotient and as such induces a fibre bundle structure 
 \begin{equation}\label{eq: fibration H7}
\begin{tikzcd} \gothf = \IC^2 \rar[hookrightarrow] & H_7\dar \\ & \IC\end{tikzcd}
 \end{equation}
 with complex coordinates $dz_1 = e^3+\I e^4$, $dz_2 = e^5+\I e^6$ on the fibre.

Assume $\Gamma\subset H_7$ is a lattice and let $M = (\Gamma \backslash H_7, J_0)$. Recall that by \cite[Lemma 5.1.4, Theorem 5.1.11]{Cor-Green} (compare \cite[Sect. 3.2.1]{rollenske11}) for an ideal $\gothb\subset \lh_7$ corresponding to a  Lie  subgroup $B\subset H_7$ the following are equivalent:
\begin{enumerate}
 \item $B\cap \Gamma$ is a lattice in $B$ and $\gothb$ is  $J_0$-invariant.
 \item $B$ maps to a compact complex submanifold in $M$.
 \item The fibration $H_7 \to B\backslash H_7$ induces on $M$ a holomorphic submersion onto a complex nilmanifold.
 \item The ideal $\gothb$ is $J_0$ invariant and $\Gamma$-rational, that is, \[\left(\langle \log\Gamma\rangle_\IQ \cap \gothb\right)\tensor \IR \isom \gothb. \]
\end{enumerate}
With this, we can understand the geometry of $M$ quite well.

\begin{prop}\label{prop: H7 fibred or foliated by toroidal groups}
Let $\Gamma\subset H_7$ be a lattice and let $M = (\Gamma\backslash H_7 , J_0)$. If $\lh_7$ contains a non-trivial $J_0$-invariant and $\Gamma$-rational ideal, then $\gothf$ is also $J_0$-invariant and $\Gamma$-rational and \eqref{eq: fibration H7} induces on $M$ a (non-principal) 2-torus bundle over an elliptic curve. 

If $\gothf$ is not $\Gamma$-rational then the  complex abelian Lie group $F = \Gamma\cap \gothf\backslash\gothf$ is toroidal and both the toroidal theta and the toroidal wild case occur for specific choices of $\Gamma$. 
\end{prop}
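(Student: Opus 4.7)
My plan is to unify both alternatives by first establishing the equivalence
\[
\gotha \text{ is } \Gamma\text{-rational}\quad \Longleftrightarrow\quad \gothf \text{ is } \Gamma\text{-rational}.
\]
Since by Proposition~\ref{prop: structure h7}(iii) the only non-trivial $J_0$-invariant ideals are $\gotha$ and $\gothf$, once this key equivalence is proved the hypothesis of the first alternative reduces to ``$\gothf$ is $\Gamma$-rational'', and the two alternatives of the proposition become genuinely complementary.

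\textbf{Proof of the key equivalence.} Using the preceding lemma, identify $\lh_7\isom V\oplus \Wedge^2 V$ with $V = \lh_7/[\lh_7,\lh_7]$ carrying the induced rational structure $V^\IQ$; then $(\Wedge^2V)^\IQ = \Wedge^2 V^\IQ$, while the commutator $[\lh_7,\lh_7]$ is always $\Gamma$-rational. Consequently $\gothf$ is rational iff its image $\ell := \gothf/[\lh_7,\lh_7]\subset V$ is a rational line. In the basis of Proposition~\ref{prop: structure h7}(ii) one computes directly that $\ell = \langle \bar e_3\rangle$ and, via the bracket isomorphism $[\lh_7,\lh_7]\isom\Wedge^2V$, the plane $\gotha$ corresponds to $\langle\bar e_1\wedge\bar e_3,\ \bar e_2\wedge\bar e_3\rangle = \ell\wedge V$. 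The crucial dimension coincidence is $\dim V = 3$: the perfect pairing $V\otimes \Wedge^2V\to \Wedge^3V$ yields a canonical $\IQ$-equivariant bijection between lines in $V$ and $2$-planes in $\Wedge^2V$ via $\ell\mapsto \ell\wedge V$, which carries our line $\ell$ to our plane $\gotha$; hence $\ell$ is rational iff $\gotha$ is, establishing the equivalence.

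\textbf{The rational case.} If $\gothf$ is $\Gamma$-rational, the Mal'cev-type criterion recalled just before the proposition gives a holomorphic submersion of complex nilmanifolds $M\to \Gamma\backslash (H_7/F)$. The fiber $(\Gamma\cap F)\backslash F$ is a rank-$4$ lattice quotient of the $4$-real-dimensional abelian $F$, hence a $2$-torus, and the base is a $1$-complex-dimensional torus, i.e.\ an elliptic curve. It is non-principal because $\gothf\not\subset [\lh_7,\lh_7]$: for example $e_3\in\gothf$ satisfies $[e_3, e_1] = -e_5\neq 0$, so the fiber group does not act on $M$ by translations.

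\textbf{The foliated case.} Assume now $\gothf$ is not rational, whence neither is $\gotha$. Since $\Gamma\cap[\lh_7,\lh_7]$ is a full-rank lattice in the $3$-dim centre (standard for $2$-step nilpotent groups) and $[\lh_7,\lh_7]\subset\gothf$, the group $\Gamma_\gothf$ has rank $\geq 3$; non-rationality of $\gothf$ forces rank exactly $3$. Hence the real span of $\Gamma_\gothf$ is the commutator, whose largest complex subspace is $\gothf_0 = \gotha$. In the $J_0$-adapted complex coordinates $z_1 = x_3+\I x_4$, $z_2 = x_5 + \I x_6$ on $\gothf\cong\IC^2$, the three commutator-lattice generators take the form $w_k = (\I\alpha_k, \beta_k)$; rescaling $z_1$ and shearing $z_2$ brings the lattice into the period-matrix form
\[
\begin{pmatrix} 1 & R_1 & R_2 \\ 0 & P_1 & P_2 \end{pmatrix}, \qquad R_k = \alpha_{k+1}/\alpha_1.
\]
Non-rationality of $\gotha$ inside the commutator is precisely the $\IQ$-non-commensurability of $(\alpha_1, \alpha_2, \alpha_3)$, equivalently $(R_1, R_2)\notin \IQ^2$, which is the toroidal irrationality condition of Theorem~\ref{thm: vogt}; thus $F$ is toroidal. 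Finally, since $(R_1, R_2)$ can be prescribed freely in $\IR^2$ by varying the abelianization lattice matrix $A\in \Gl_3(\IR)$ of $\Gamma$, choosing $R_2$ an algebraic irrational yields the theta case via Liouville's theorem, while choosing $R_2$ a suitable Liouville number yields the wild case, exactly as in Example~\ref{ex: theta and wild in dim two}. The main obstacle throughout is the key equivalence, which hinges on the low-dimensional coincidence $\dim V = 3$; the remaining arguments are essentially direct computations.
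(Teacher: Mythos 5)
Your overall structure is sound and, where it is complete, it differs from the paper in two interesting ways. For the transfer of rationality you prove the full equivalence ``$\gotha$ rational $\Leftrightarrow$ $\gothf$ rational'' via the $\IQ$-defined bijection $\ell\mapsto \ell\wedge V$ between lines in $V$ and $2$-planes in $\Wedge^2V$ (using $\dim V=3$ and the identification of the rational structure on the commutator with $\Wedge^2 V^\IQ$); the paper only needs, and only proves, the implication ``$\gotha$ rational $\Rightarrow$ $\gothf$ rational'', by noting that $\gothf/\gotha$ is the centre of $\lh_7/\gotha$ and hence rational for the induced lattice, so that its preimage $\gothf$ is rational. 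For toroidality the paper argues structurally ($F$ contains no compact complex subgroup, then the Remmert--Morimoto decomposition), while you verify the irrationality condition on an explicit period matrix; your computation that $\Gamma\cap\gothf$ has rank $3$ with real span the commutator, that $\gothf_0=\gotha$, and that non-rationality of $\gotha$ amounts to $(R_1,R_2)\notin\IQ^2$ is correct, and this part is a legitimate (in fact more explicit) alternative.

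The genuine gap is in the final step, the existence of lattices realising both the theta and the wild case. First, the claim that $(R_1,R_2)$ ``can be prescribed freely by varying the abelianization lattice matrix $A\in\Gl(3,\IR)$ of $\Gamma$'' is asserted, not proved: you need (i) that such an $A$ actually comes from a lattice in $H_7$ --- this is exactly the $\IZ$-Lie-subring / Baker--Campbell--Hausdorff argument of the paper's Example~\ref{ex: H7 theta and wild} --- and (ii) an identification of how $(R_1,R_2)$ depends on $A$ (the $e_4$-components of the brackets $[u_i,u_j]$ are the $2\times 2$ minors of the first two rows of $A$, whose ratios can indeed be arbitrary); neither is in your text. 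Second, and more substantively, ``choosing $R_2$ a suitable Liouville number yields the wild case'' is false as stated: the theta condition of Theorem~\ref{thm: vogt} bounds the distance of the \emph{pair} $\sigma(R_1,R_2)$ from $\IZ^2$ from below, so if $R_1$ is, say, an algebraic irrational, the group is theta no matter how well $R_2$ is approximated by rationals. For wildness you must control both entries along the same denominators, e.g.\ take $R_1\in\IQ$ --- the paper's explicit lattice produces exactly $R=(0,a)$, reducing to Example~\ref{ex: theta and wild in dim two}. With an explicit lattice construction and this correction, your argument closes; without them the second assertion of the proposition is not yet proved.
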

\begin{proof}
By Proposition \ref{prop: structure h7} there are only two non-trivial, $J_0$-invariant ideals, so we only need to show that if ${\gotha} = \langle e_5, e_6\rangle$ is $\Gamma$-rational, then so is $\gothf$. 

So assume $\gotha$ is $\Gamma$-rational and let $A$ be the corresponding Lie subgroup, which induces a principal fibration in elliptic curves on $M$. Then $\gothf/\gotha$ is the centre of $\lh_7/\gotha$ and thus a $\Gamma\cap A \backslash\Gamma$-rational subspace    \cite[Sect.\ 5]{Cor-Green}. As $\gotha$ is $\Gamma$-rational, the preimage $\gothf$ is $\Gamma$-rational as well.

If $\gothf$ is not $\Gamma$-rational then $F$ does not contain compact complex subgroups, because the only $J_0$-invariant subspace contained in the real span of the lattice is $\gotha$, which is not $\Gamma$-rational by the above. So by Proposition \ref{prop: structure abelian cx Lie-groups} $F$ is a non-compact toroidal group.

Examples where $F$ is toroidal wild repectively toroidal theta are constructed in Example \ref{ex: H7 theta and wild} below. 
\end{proof}

\begin{exam}\label{ex: H7 theta and wild}
 Let $a\in \IR$ and consider in $\lh_7$ the vectors
 \[ v_1 = \sqrt 2 e_1, \, v_2 = \sqrt 2 e_2,\,  v_3 = \sqrt2 (e_3 + ae_1), v_4 = e_4,\, v_5= e_5,\, v_6 = ae_4 - e_6.\]
The $\IZ$-span $V_\IZ =\langle v_1, \dots, v_6\rangle_\IZ$  is a Lie-subring and by the Baker--Campbell--Hausdorff-formula the exponential of this Lie-subring is a lattice $\Gamma_a$ in $H_7$. 

Computing the intersection of $V_\IZ\cap \gothf$ we see that $\gothf$ is $\Gamma_a$-rational if and only if $a \in \IQ$. 

Now assume $a\not \in \IQ$ and  consider $\gothf$ as a complex $2$-dimensional vector space with basis $\tilde z_1 = -\I z_1$ and  $z_2$. Then we obtain the period matrix for $F$ as in \eqref{eq: period matrix} by writing $v_4, v_5, v_6$, the generators of  $\Gamma_\gothf = V_\IZ\cap \gothf$, in complex coordinates, which gives
\[ \begin{pmatrix}
   1  & 0 & a\\
   0 & 1& \I
   \end{pmatrix}.
\]
As explained in  Example  \ref{ex: theta and wild in dim two}, the group $F$ is toroidal, because $a\not \in \IQ$ and both the toroidal wild and the toroidal theta case can occur for specific choices of $a$. 
\end{exam}

\subsection{Proof of Theorem \ref{thm: dim 6}}

Let $\lg$ be the Lie algebra of the universal cover $G$ of $M = \Gamma\backslash G$. 
 Nilpotent Lie algebras of real dimension $6$ admitting a complex structure have been classified by Salamon \cite{salamon01}. It was shown in \cite[Proof of Thm.\,B]{rollenske09b} that Conjecture \ref{conj} holds independent of the choice of complex structure and lattice unless $\lg \isom \lh_7$.
 
Thus it remains to treat the case $\lg = \lh_7$ and we continue to use the notations introduced in Section \ref{sect: h7}. 
By Proposition \ref{prop: structure h7} we may assume that $J=J_0$.

If $\gothf\subset \lh_7$ is $\Gamma$-rational then by Proposition \ref{prop: H7 fibred or foliated by toroidal groups} the fibration \eqref{eq: fibration H7} induces a holomorphic 2-torus fibration over an elliptic curve on $M$ and Conjecture \ref{conj} holds by \cite{con-fin01}.

If $\gothf$ is not $\Gamma$-rational then again by Proposition \ref{prop: H7 fibred or foliated by toroidal groups} $M$ is foliated in toroidal groups $F$. 
Consider the basis  of $\einsnull{\lh_7}$ given by  $X_1 = e_1+\I e_2, X_2 = e_3+\I e_4, X_3 = e_5+ \I e_6$. Then Theorem \ref{thm: toroidal foliation} applies with $\gothf_0 = \gotha = \langle e_5, e_6\rangle$ and the choice $\nulleins{\lg_0} = \langle \bar X_1, \bar X_3\rangle$.\footnote{It is interesting to observe that $\einsnull{\lg_0}\oplus \overline{\einsnull{\lg_0}}$ is not a subalgebra of ${\lh_7}_{\IC}$.}
Thus Conjecture \ref{conj} holds also in this case, which concludes the proof. \qed


\end{document}